\theoremstyle{plain}
\newtheorem{theorem}{Theorem}[section]
\newtheorem{proposition}[theorem]{Proposition}
\newtheorem{lemma}[theorem]{Lemma}
\newtheorem{corollary}[theorem]{Corollary}
\theoremstyle{definition}
\newtheorem{definition}[theorem]{Definition}
\newtheorem{example}[theorem]{Example}
\newtheorem{remark}[theorem]{Remark}
\DeclareMathOperator{\id}{id}
\DeclareMathOperator{\Id}{Id}
\DeclareMathOperator{\BF}{BF}
\DeclareMathOperator{\diag}{diag}
\DeclareMathOperator{\sgn}{sgn}
\newcommand{\beg}{\mathfrak{b}}
\newcommand{\per}{\mathfrak{p}}
\newcommand{\N}{\mathbb{N}}
\newcommand{\Z}{\mathbb{Z}}
\newcommand{\R}{\mathbb{R}}
\renewcommand{\AA}{\mathcal{A}}
\newcommand{\BB}{\mathcal{B}}
\newcommand{\FF}{\mathcal{F}}
\newcommand{\LL}{\mathcal{L}}
\newcommand{\GG}{\mathcal{G}}
\newcommand{\X}{\mathsf{X}}
\newcommand{\FE}{\sim_{\textrm{FE}}}
\newcommand{\fracpart}[1]{ \langle #1 \rangle}
\newcommand{\intpart}[1]{ \lfloor #1 \rfloor}
\title{Flow equivalence of sofic beta-shifts} 
\author{Rune Johansen}
\date{\today}
\begin{document}

\begin{abstract}
The Fischer, Krieger, and fiber product covers of sofic beta-shifts are constructed and used to show that every strictly sofic beta-shift is $2$-sofic.
Flow invariants based on the covers are computed, and shown to only depend on an single integer easily determined from the $\beta$-expansion of 1.
It is shown that any beta-shift is flow equivalent to a beta-shift given by some $1< \beta < 2$, and concrete constructions lead to further reductions of the flow classification problem.
For each sofic beta-shift, there is an action of $\Z/2\Z$ on the edge shift given by the fiber product, and it is shown precisely when there exists a flow equivalence respecting these $\Z/2\Z$-actions.
This opens a connection to ongoing efforts to classify general irreducible 2-sofic shifts via flow equivalences of reducible SFTs equipped with $\Z/2\Z$-actions. 
\end{abstract}

\maketitle

\section{Introduction}
One of the simplest classes of shift spaces is the class of irreducible shifts of finite type, and Franks has given a very satisfactory classification of these up to flow equivalence in terms of a complete invariant that is both easy to compute and easy to compare \cite{franks}. This result has been extended to general shifts of finite type by Boyle and Huang  \cite{boyle,boyle_huang,huang}, but very little is known about the flow equivalence of the class of irreducible sofic shifts even though it constitutes a natural first generalization of the class of shifts of finite type.
There is an ongoing effort to classify irreducible 2-sofic shifts up to flow equivalence by considering the reducible edge shift given by the fiber product cover and the associated $\Z / 2\Z$ action \cite{boyle_carlsen_eilers_isotopy,boyle_carlsen_eilers_sofic}. The present work was motivated by a desire to apply these results to the class of sofic beta-shifts. Towards this end, canonical covers of sofic beta-shifts are constructed and used to compute a number of flow invariants. It is shown that strictly sofic beta-shifts are 2-sofic, 
and the flow classification of their fiber products is examined in detail. 
This serves to connect the present work to the machinery employed in \cite{boyle_carlsen_eilers_isotopy,boyle_carlsen_eilers_sofic}, but there is still a gap between the concrete constructions carried out in this paper and the general results of the classification program.
In another angle of attack, the flow equivalence problem for sofic beta-shifts is reduced to a simpler problem through a series of concrete constructions.

In Section \ref{sec_introduction}, notation is established and an introduction to the basic definitions and properties of beta-shifts is given. In Section \ref{sec_beta_covers}, the right Fischer covers of sofic  beta-shifts are determined, and the covering map is shown to be $2$ to $1$ if the shift is strictly sofic. This result is used to show that the right Krieger cover is identical to the right Fischer cover and to construct 
the right fiber product cover. Section \ref{sec_beta_classification} concerns the flow classification of beta-shifts. It is shown that for every $\beta > 1$, there exists $1 < \beta' < 2$ such that the two beta-shifts are flow equivalent and such that the new generating sequence has a form with certain properties. Additionally, the Bowen-Franks groups of the covers considered above are computed and shown to depend only on a sigle integer easily computed from the $\beta$-expansion of $1$.
%The flow class of the fiber product cover of a sofic beta-shift is shown to depend only on a single integer.
There is an action of $\Z/2\Z$ on the reducible edge shift of the fiber product, and it is shown precisely when there exists a flow equivalence between the fiber product edge shifts that respects these $\Z/2\Z$-actions. 

\subsection*{Acknowledgements}
Supported by VILLUM FONDEN through the network for Experimental Mathematics in Number Theory, Operator Algebras, and Topology. Supported by the Danish National Research Foundation through the Centre for Symmetry and Deformation (DNRF92).

%##################################################
\section{Background and notation}
\label{sec_introduction}

\subsection{Shift spaces, flow equivalence, and labeled graphs}
Here, a short introduction to the definition and properties of shift
spaces is given to make the present paper self-contained; 
for a thorough treatment of shift spaces see \cite{lind_marcus}. 
Let $\AA$ be a finite set with the discrete topology. The
\emph{full shift} over $\AA$ consists of the space $\AA^\Z$ endowed
with the product topology and the \emph{shift map} $\sigma \colon
\AA^\Z \to \AA^\Z$ defined  by $\sigma(x)_i = x_{i+1}$ for all $i \in
\Z$. Let $\AA^*$ be the collection of finite words (also known as
blocks) over $\AA$. A
subset $X \subseteq \AA^\Z$ is called a \emph{shift space} if it
is invariant under the shift map and closed. 
For each $\FF \subseteq \AA^*$, define  $\X_\FF$ to be the set of
biinfinite sequences in $\AA^\Z$ which do not contain any of the
\emph{forbidden words} from $\FF$. 
A subset $X \subseteq \AA^\Z$ is a shift space if and only if there
exists $\FF \subseteq \AA^*$ such that $X = \X_\FF$
(cf. \cite[Proposition 1.3.4]{lind_marcus}). $X$ is said to be 
a \emph{shift of finite type} (SFT) if this is possible for a finite
set $\FF$.
The \emph{language} of a shift space $X$ is defined to be the set of all words which occur in at least one $x \in X$, and it is denoted $\BB(X)$.
%$X$ is said to be
%\emph{irreducible} if there for every $u,w \in \BB(X)$ exists $v \in
%\BB(X)$ such that $uvw \in \BB(X)$. 
For each $x \in X$, define the \emph{left-ray} of $x$ to be $x^- = \cdots
x_{-2} x_{-1}$ and define the \emph{right-ray} of $x$ to be $x^+ = x_0
x_1 x_2 \cdots$. For a shift space $X$, the sets of all left-rays and all right-rays are, respectively, denoted  $X^-$ and $X^+$. Define the \emph{predecessor set} of a right ray $x^+ \in X^+$ to be $P^\infty(x^+) = \{ x^- \in X^- \mid x^-x^+\in X \}$.  

A bijective, continuous, and shift intertwining map between two shift
spaces is called a \emph{conjugacy}, and when such a map exists, 
the two shift spaces are said to be \emph{conjugate}. 
A function $\pi \colon X_1 \to X_2$ between shift spaces $X_1$ and
$X_2$ is said to be a \emph{factor map} if it is continuous,
surjective, and shift intertwining. A shift space is
called \emph{sofic} \cite{weiss} if it is the image of an SFT under a factor
map.

Let $(X ,\sigma)$ be a shift space, equip $X \times \R$ with the product topology, and define an equivalence relation on $X \times \R$ by $(x,t) \sim (\sigma(x), t-1)$. The \emph{suspension flow} $SX$ of $X$ is the quotient space $X \times \R / {\sim}$. The equivalence class of $(x,t)$ in $SX$ will be denoted $[x,t]$. For each $[x,t] \in SX$ and $r \in \R$, define $[x,t] + r = [x,t+r]$. For each $z \in SX$, the set $\{ z + r \in SX \mid r \in \R \}$ is called a \emph{flow line}.
If $Y$ is a shift space and $\Phi \colon SX \to SY$ is a homeomorphism, then for each $z \in SX$, there exists a map $\varphi_z \colon \R \to \R$ such that $\Phi(z + r) = \Phi(z) + \varphi_{z}(r)$ for all $r \in \R$, i.e.\ a homeomorphism maps flow lines to flow lines.
A homeomorphism $\Phi \colon SX \to SY$ is said to be a \emph{flow equivalence} if
there for each $z \in SX$ exists a monotonically increasing map $\varphi_z \colon \R \to \R$ such that $\Phi(z + r) = \Phi(z) + \varphi_{z}(r)$. In this case, $X$ and $Y$ are said to be \emph{flow equivalent} and this is denoted $X \FE Y$. Flow equivalence is generated by conjugacy
and \emph{symbol expansion} \cite{parry_sullivan}. In the following, a number of lemmas from \cite[Section 2.3]{johansen_thesis} will be used in the construction of concrete flow equivalences between beta-shifts.

%\subsubsubsection{Graphs}
For countable sets $E^0$ and $E^1$, and maps $r,s \colon E^1 \to E^0$
the quadruple $E = (E^0,E^1,r,s)$ is called a \emph{directed graph}. The
elements of $E^0$ and $E^1$ are, respectively, the vertices and the
edges of the graph.
%, while the maps give the directions of the edges:
For each edge $e \in E^1$, $s(e)$ is the vertex where $e$ starts,
and $r(e)$ is the vertex where $e$ ends. A \emph{path} $\lambda = e_1 \cdots
e_n$ is a sequence of edges such that $r(e_i) = s(e_{i+1})$ for all $i
\in \{1, \ldots n-1 \}$.
%The vertices in $E^0$ are considered to be paths of length $0$.
For each $n \in \N_0$, the set of paths of length $n$ is denoted
$E^n$, and the set of all finite paths is denoted $E^*$.
Extend
the maps $r$ and $s$ to $E^*$ in the natural way.
%by defining $s(e_1 \cdots e_n) = s(e_1)$
%and $r(e_1 \cdots e_n) = r(e_n)$. A \emph{circuit} is a path $\lambda$
%with $r(\lambda) = s(\lambda)$ and $\lvert \lambda \rvert > 0$.
%A directed graph $E$ is said to be \emph{irreducible} (or transitive) if there for each pair of vertices $u,v \in E^0$ exists a path $\lambda \in E^*$ with $s(\lambda) = u$ and $r(\lambda) = v$.
%Let $E$ be a directed graph and let $u,v \in E^0$.
%For $u,v \in E^0$, $u$ is said to be \emph{connected} to $v$ if there is a path $\lambda \in E^*$ such that $s(\lambda) = u$ and $r(\lambda) = v$, and this is denoted by $u \geq v$ \cite[Section 4.4]{lind_marcus}.
%A vertex is said to be \emph{maximal}, if it is connected to all other
%vertices. $E$ is said to be irreducible if all vertices are maximal.
%If $E$ has a unique maximal vertex, this vertex is said to be the \emph{root} of $E$.
%Otherwise, $E$ is said to be \emph{reducible}.
$E$ is said to be \emph{essential} if every vertex emits
and receives an edge.
%, and it is called \emph{finite} if both $E^0$ and $E^1$ are finite.
For a finite essential directed graph $E$, the 
\emph{edge shift} $(\X_E, \sigma_E)$ is defined by
\begin{displaymath}
  \X_E = \left\{ x \in (E^1)^\Z \mid r(x_i) = s(x_{i+1}) \textrm{ for all }
  i \in \Z \right\}.
\end{displaymath}
%Note that according to the definition used here, this is only a shift space if $E^1$ (and hence $E^0$) is a finite set.
For a finite graph $E$ with $n \times n$ integer adjacency matrix $A$, the \emph{Bowen--Franks} group of $\X_E$ is the co-kernel $\BF(\X_E) = \Z^n / \Z^n (\Id - A)$ \cite[Definition 7.4.15]{lind_marcus}.
If $\X_A, \X_B$ are irreducible edge shifts not flow equivalent to the trivial shift then $\X_A \FE \X_B$ if and only if $\BF(\X_A) = \BF(\X_B)$ and $\sgn \det(\Id - A) = \sgn \det(\Id - B)$ \cite{franks}. The pair consisting of the Bowen--Franks group and the sign of the determinant will be denoted $\BF_+$.

A \emph{labeled graph} $(E, \LL)$ over an alphabet $\AA$ consists
of a directed graph $E$ and a surjective labeling map $\LL \colon E^1
\to \AA$. Extend the labeling map to $\LL \colon E^* \to \AA^*$ by
defining $\LL(e_1 
\cdots e_n) = \LL(e_1) \cdots \LL(e_n) \in \AA^*$.
%The labeled graph $(E, \LL)$ is called, respectively, \emph{essential} and \emph{finite} if $E$ is, respectively, essential and finite.
For a finite essential labeled graph $(E, \LL)$, define the shift space $(\X_{(E, \LL)}, \sigma)$ by 
\begin{displaymath}
  \X_{(E, \LL)} = \left\{ \left( \LL(x_i) \right)_i \in \AA^\Z \mid 
                          x \in \X_E  \right\}.
\end{displaymath}
The
labeled graph $(E, \LL)$ is said to be a \emph{presentation} of the
shift space $\X_{(E, \LL)}$, and a \emph{representative} of a word $w \in
\BB(\X_{(E, \LL)}) $ is a path $\lambda \in E^*$ such that
$\LL(\lambda) = w$.
%Representatives of rays are defined analogously.
%If $H \subseteq E^0$ then the \emph{subgraph of} $(E, \LL)$ \emph{induced by} $H$ is the labeled subgraph of $(E, \LL)$ with vertices $H$ and edges $\{ e \in E^1 \mid s(e),r(e) \in H \}$. 

%\subsubsubsection{Sofic shifts}
Every SFT is sofic, and a sofic shift which is not an SFT is called \emph{strictly sofic}.
A shift space is sofic if and only 
if it can be presented by a finite labeled graph \cite{fischer}.
%A sofic shift
%space is irreducible if and only if it can be presented by an
%irreducible labeled graph (see \cite[Section 3.1]{lind_marcus}).
%Let $(E, \LL)$ be a finite labeled graph and let $\pi_\LL \colon \X_E \to \X_{(E,  \LL)}$ be the factor map induced by the labeling map $\LL \colon
%E^1 \to \AA$ then the SFT $\X_E$ is called a \emph{cover} of the
%sofic shift $\X_{(E, \LL)}$, and $\pi_\LL$ is called the covering map. 
An irreducible sofic shift $X$, has a unique (up to graph automophism) minimal follower-separated presentation called the right \emph{Fischer cover} \cite{fischer}.
A strictly sofic shift is said to be \emph{2-sofic} if the factor map induced by the labeling of the right Fischer cover is $2$ to $1$.  
The right \emph{Krieger cover} gives a canonical presentation of an arbitrary sofic shift \cite{krieger_sofic_I}.

\subsection{Beta-shifts}
\label{sec_beta_introduction}
Here, a short introduction to the basic definitions and properties of beta-shifts is given. For a more detailed treatment of beta-shifts, see \cite{blanchard_beta}.
Let $\beta \in \R$ with $\beta > 1$. For each $t \in [0;1]$ define sequences $(r_n(t))_{n \in \N}$ and $(x_n(t))_{n \in \N}$ by 
\begin{align*}
     r_1(t)  &=  \fracpart{ \beta t }   ,   &r_n(t) = \fracpart{ \beta r_{n-1}(t) }, \\
     x_1(t) &=  \intpart{ \beta t }   ,     &x_n(t) = \intpart{ \beta r_{n-1}(t) },
\end{align*}
where $\intpart y$ and $\fracpart y$ are respectively the integer part and the fractional part of $y \in \R$. The sequence $x_1(t)x_2(t) \cdots$ is said to be the $\beta$-\emph{expansion} of $t$, and R{\'e}nyi \cite{renyi} has proved that  
\begin{displaymath}
  t = \sum_{n=1}^\infty \frac{ x_n(t) }{ \beta^n }.
\end{displaymath} 
The $\beta$-expansion of $1$ is denoted $e(\beta)$. As an example, consider $\beta = (1+\sqrt 5)/2$ where the $\beta$-expansion of $1$ is $e(\beta) = 11000\cdots$. 
The $\beta$-expansion of $t$ is said to be \emph{finite} if there exists $N \in \N$ such that $x_n(t) = 0$ for all $n \geq N$. It is said to be \emph{eventually periodic} if there exist $N,p \in \N$ such that $x_{n+p}(t) = x_n(t)$ for all $n \geq N$.
%It is said to be periodic, if it is eventually periodic with $N=0$.
Define the \emph{generating sequence of} $\beta$ to be
\begin{displaymath}
g(\beta) = \left \{ \begin{array}{l c l}
        e(\beta)                     & , & e(\beta) \textrm{ is infinite } \\
        (a_1 a_2 \cdots a_{k-1}(a_k-1))^\infty   & , & e(\beta) = a_1  \cdots a_k00\cdots \textrm{ and } a_k \neq 0
      \end{array} \right. .
\end{displaymath}

\index{beta-shift}
\index{golden mean shift!as beta-shift}
\index{beta-shift!order on}
Define $\BB_\beta = \{ x_n(t) \cdots x_m(t) \mid 1 \leq n \leq m, t \in [0;1] \}$. It is easy to check that $\BB_\beta$ is the language of a shift space $\X_\beta$. Such a shift space is called a \emph{beta-shift}. The alphabet $\AA_\beta$ of $\X_\beta$ is either $\{ 0, \ldots, \beta -1\}$ or $ \{0 , \ldots , \intpart \beta \}$ depending on whether $\beta$ is an integer. If $\beta \in \N$, then $\X_\beta$ is the full $\{0, \ldots, \beta-1\}$-shift. For $\beta = (1+\sqrt 5)/2$ as considered above, $\X_\beta$ is conjugate to the golden mean shift.

The words in $\BB(\X_\beta)$ and right-rays in $\X_\beta^+$ are ordered by lexicographical order $\leq$, and the following two theorems use this order to give fundamental descriptions of beta-shifts.

\begin{theorem}[{R{\'e}nyi \cite{renyi}}] 
\label{thm_renyi}
Let $\beta >1$, let $g(\beta)$ be the generating sequence, let $\AA_\beta$ be the alphabet of $\X_\beta$, and let $x^+ = x_1 x_2 \ldots \in \AA_\beta^\N$. Then $x^+ \in \X_\beta^+$ if and only if 
$x_k x_{k+1} \cdots \leq g(\beta)$ for all $k \in \N$.
\end{theorem}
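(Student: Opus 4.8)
The plan is to prove this in two stages: first determine which one-sided sequences arise as genuine $\beta$-expansions of points of $[0;1)$, and then pass to the closure to obtain $\X_\beta^+$.

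The starting observation is dynamical: unwinding the recursions gives $x_{n+1}(t) = x_n(r_1(t))$ and $r_{n+1}(t) = r_n(r_1(t))$, so the shift $\sigma$ acts on $\beta$-expansions by $\sigma\bigl(x_1(t)x_2(t)\cdots\bigr) = x_1(r_1(t))x_2(r_1(t))\cdots$; in particular every tail of a $\beta$-expansion is again a $\beta$-expansion, of a point $r_k(t)$ which lies in $[0;1)$ for $k \geq 1$. I would also record that $\X_\beta^+$ is the closure in $\AA_\beta^{\N}$ of $\{x_1(t)x_2(t)\cdots \mid t \in [0;1)\}$: this follows from the description of $\BB(\X_\beta)$ as $\BB_\beta$ together with the previous sentence, since every word of $\BB_\beta$ occurs as a prefix of the expansion of some $r_k(t) \in [0;1]$, and the expansions of points just below $1$ approximate those one would get ``from $1$'' in the limit. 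These facts reduce the theorem to the statement that \emph{the $\beta$-expansion of $s$ satisfies $x_k(s)x_{k+1}(s)\cdots \leq g(\beta)$ for all $k$ whenever $s \in [0;1)$, with equality forbidden}, plus a routine limit argument for the closure.

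For the necessity half applied to a genuine expansion it is then enough, by the dynamical reduction, to show $x_1(s)x_2(s)\cdots \leq g(\beta)$ for $s \in [0;1)$. I would argue by comparing digits with the construction of $g(\beta)$ as the quasi-greedy expansion of $1$: if the first $j$ digits of $x(s)$ agree with $g(\beta)$, then the $j$-th remainders satisfy $r_j(s) \leq \rho_j$, where $\rho_j$ is the corresponding remainder for $g(\beta)$, so monotonicity of $y \mapsto \intpart{\beta y}$ forces $x_{j+1}(s) \leq g_{j+1}$; and the comparison cannot remain an equality at every position because $x(s) = g(\beta)$ would give $s = 1$. The delicate point is exactly the case $e(\beta) = a_1\cdots a_k 0 0 \cdots$ finite: there the greedy expansion of $1$ is not itself admissible as a tail, and the whole role of replacing $e(\beta)$ by $g(\beta) = (a_1\cdots a_{k-1}(a_k-1))^\infty$ is to record the sharp self-consistent bound; I would verify directly that every tail of $g(\beta)$ is $\leq g(\beta)$ (self-admissibility) and that $g(\beta) = \sup\{x(s)\mid s\in[0;1)\}$. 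The necessity half for an arbitrary $x^+ \in \X_\beta^+$ then follows from the closure description: if some tail of $x^+$ exceeded $g(\beta)$, a long enough prefix would already exceed the matching prefix of $g(\beta)$, hence so would the same prefix of a nearby genuine expansion $x(s)$, contradicting what was just shown.

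For sufficiency, suppose $\sigma^k(x^+) \leq g(\beta)$ for all $k \geq 1$. I would show that every prefix $x_1\cdots x_m$ lies in $\BB_\beta$ by producing a point of $[0;1]$ whose expansion begins with it, the natural candidate being the number represented by $\sum_{n \geq 1} x_n \beta^{-n}$; the hypothesis on the tails, via the self-admissibility of $g(\beta)$, is precisely what guarantees that the greedy algorithm run on that number does not ``carry'' into a larger digit and so reproduces $x_1\cdots x_m$. With all prefixes of $x^+$ in $\BB_\beta$ one gets $x^+ \in \X_\beta^+$, left-extendability being automatic since prepending a $0$ to an admissible sequence produces the expansion of $t/\beta$, which is again admissible. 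I expect the main obstacle to be this sufficiency direction together with the self-admissibility of $g(\beta)$ and the boundary case where $e(\beta)$ is finite: making the greedy algorithm on the reconstructed real number track the prescribed digits uses the lexicographic hypothesis in a slightly nonobvious way, and the two genuinely load-bearing facts — the self-admissibility and supremum property of $g(\beta)$, and the monotonicity plus left-continuity of $s \mapsto x_1(s)x_2(s)\cdots$ — are where the real work sits, with everything else reducing to bookkeeping in the lexicographic order.
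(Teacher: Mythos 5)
The paper does not actually prove Theorem \ref{thm_renyi}: it is quoted from the literature (R{\'e}nyi, and really Parry for this lexicographic characterization), so there is no internal proof to compare yours against. Judged on its own terms, your architecture --- first characterize the genuine greedy expansions of points of $[0;1)$, then pass to the closure, with the quasi-greedy sequence $g(\beta)$ replacing $e(\beta)$ so that the bound is self-consistent --- is the standard and correct one, and the necessity half as you outline it (tails of expansions are expansions of the remainders $r_k(t)\in[0;1)$, digitwise comparison with $g(\beta)$, closedness of the condition $\{y : y\le g(\beta)\}$) goes through.

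The one step you specify concretely enough to check is the sufficiency mechanism, and as written it fails in exactly the boundary case you flag. You propose to certify a prefix $x_1\cdots x_m$ by running the greedy algorithm on $t=\sum_{n\ge1}x_n\beta^{-n}$ and arguing it reproduces the digits. Take $\beta=(1+\sqrt{5})/2$, so $e(\beta)=11000\cdots$ and $g(\beta)=(10)^\infty$, and take $x^+=(10)^\infty$, which satisfies the hypothesis with equality in every tail. Then $t=\sum_n x_n\beta^{-n}=1$ and the greedy algorithm returns $e(\beta)=1100\cdots$, not $x^+$; the same happens for any $x^+$ having a tail equal to $g(\beta)$, which is the very case the passage from $e(\beta)$ to $g(\beta)$ exists to accommodate. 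The repair is standard but is precisely the content that is missing: work with the truncated sums $t_m=\sum_{n=1}^m x_n\beta^{-n}$ (or with $t-\epsilon$), and prove the comparison lemma that if every tail of $x^+$ is $\le g(\beta)$ lexicographically then $\sum_{n>j}x_n\beta^{j-n}<1$ for every $j\ge0$. Note that lexicographic order between digit strings does not by itself imply the corresponding numerical inequality (for $1<\beta<2$ one has $020\cdots<100\cdots$ lexicographically while $2\beta^{-2}>\beta^{-1}$), so this lemma genuinely needs the admissibility of all tails and the identity $\sum_n g_n\beta^{-n}=1$; it is one of the two ``load-bearing facts'' you correctly identify but do not supply. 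With that lemma in hand the greedy algorithm on $t_m$ does reproduce $x_1\cdots x_m$, every prefix lies in $\BB_\beta$, and your closure and left-extension arguments finish the proof; without it, the sufficiency direction remains a plan rather than a proof.
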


\begin{theorem}[{Parry \cite{parry}}]
\label{thm_beta_existence}
A sequence $a_1 a_2 \cdots$ is the $\beta$-expansion of $1$ for some $\beta > 1$ if and only if $a_k a_{k+1} \cdots < a_1 a_2 \cdots$ for all $k \in \N$. Such a $\beta$ is uniquely given by the expansion of $1$.
\end{theorem}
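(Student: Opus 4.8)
\emph{Overview of the plan.} I will establish both implications, together with uniqueness, by arguing directly with the greedy algorithm that produces $e(\beta)$ --- that is, with the sequences $(x_n(t))$ and $(r_n(t))$ of Section~\ref{sec_beta_introduction} --- and with R{\'e}nyi's identity $t=\sum_{n\ge1}x_n(t)\beta^{-n}$. The single elementary estimate behind everything is this: if two sequences of non-negative integers first disagree at position $j$, with a difference of absolute value at least $1$ there, then that one discrepancy dominates the whole remainder as soon as the tail past position $j$ has base-$\beta$ value strictly below $\beta^{-j}$. This is exactly what converts lexicographic comparisons of digit sequences into comparisons of the reals $\sum_k a_k\beta^{-k}$, and conversely.

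\emph{Necessity.} Write $a_1a_2\cdots=e(\beta)$ and put $x_n=x_n(1)$, $r_n=r_n(1)$, so that $r_0=1$ while $r_n\in[0;1)$ for $n\ge1$. A routine induction from $x_1(r_n)=\intpart{\beta r_n}$ and $r_1(r_n)=\fracpart{\beta r_n}$ shows $x_k(r_n)=a_{n+k}$ for all $k$, so R{\'e}nyi's identity applied to $r_n$ gives $r_n=\sum_{k\ge1}a_{n+k}\beta^{-k}$. Fix $n\ge1$. If the rays $a_{n+1}a_{n+2}\cdots$ and $a_1a_2\cdots$ agree then $r_n=r_0=1$, which is impossible; hence they first disagree at some position $j$, and if $a_{n+j}>a_j$ then, using $\sum_{k>j}a_{n+k}\beta^{-k}\ge0$ together with $\sum_{k>j}a_k\beta^{-k}=\beta^{-j}r_j<\beta^{-j}$, one gets $r_n-1=\sum_{k\ge j}(a_{n+k}-a_k)\beta^{-k}\ge\beta^{-j}-\beta^{-j}r_j>0$, contradicting $r_n<1$. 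Therefore $a_{n+j}<a_j$, i.e.\ $a_{n+1}a_{n+2}\cdots<a_1a_2\cdots$.

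\emph{Sufficiency and uniqueness.} Assume $a_ka_{k+1}\cdots<a_1a_2\cdots$ for all $k\ge2$. Comparing first letters gives $a_k\le a_1$ for every $k$ and $a_1\ge1$, so $F(\beta):=\sum_{k\ge1}a_k\beta^{-k}$ is continuous and strictly decreasing on $(1;\infty)$, with $F(\beta)\to0$ as $\beta\to\infty$ and $F(\beta)\to\sum_k a_k$, a value exceeding $1$, as $\beta\to1^+$ (the sole exclusion being $10^\infty$, for which no $\beta>1$ works). Hence there is a unique $\beta>1$ with $F(\beta)=1$, which already yields uniqueness. Since the greedy recursion is determined by $r_0=1$, to conclude $e(\beta)=(a_k)$ it suffices to check that the sequence $\rho_0=1$, $\rho_n=\sum_{k\ge1}a_{n+k}\beta^{-k}$ for $n\ge1$, satisfies the recursion $\beta\rho_{n-1}=a_n+\rho_n$ (immediate) and $\rho_n\in[0;1)$ for $n\ge1$, for then $(x_n(1),r_n(1))=(a_n,\rho_n)$ by induction. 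Non-negativity of $\rho_n$ is clear. For the upper bound put $S=\sup_{n\ge1}\rho_n$; if $S>1$, applying the key estimate at the first disagreement $j=j(n)$ of $a_{n+1}a_{n+2}\cdots$ with $a_1a_2\cdots$ (where now $a_{n+j}<a_j$) gives $1-\rho_n\ge\beta^{-j}(1-\rho_{n+j})\ge\beta^{-1}(1-S)$ for every $n$, hence $S\le1+\beta^{-1}(S-1)$, which forces $S\le1$ --- a contradiction; so $S\le1$. Finally, if $\rho_m=1$ for some $m\ge1$, the same estimate, now an equality, forces $a_k=0$ for all $k>j(m)$ and simultaneously $\rho_{m+j(m)}=1$; but the former gives $\rho_{m+j(m)}=0$, a contradiction. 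Thus $\rho_n<1$ for all $n\ge1$, and $e(\beta)=(a_k)$.

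\emph{Main obstacle.} I expect the uniform bound $\rho_n<1$ in the sufficiency part to be the delicate point. In the necessity argument the inequality $r_j<1$ is handed to us by the very construction of $e(\beta)$, whereas here the estimate at position $j(n)$ refers forward to the a priori uncontrolled tail $\rho_{n+j(n)}$, so there is no naive induction available: one must pass to the supremum $S$ and exploit the contraction factor $\beta^{-1}<1$ coming from $\beta>1$, and then dispose of the boundary case $\rho_n=1$ by a separate (degenerate-expansion) argument.
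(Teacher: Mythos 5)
The paper does not prove this statement: it is quoted from Parry \cite{parry} and used as a black box, so there is no internal proof to compare against. Judged on its own, your argument is correct and is essentially a self-contained reconstruction of Parry's original greedy-algorithm proof: necessity via the identity $r_n = \sum_{k\ge 1} a_{n+k}\beta^{-k}$ and the first-disagreement estimate against $r_n<1$; existence and uniqueness of $\beta$ via monotonicity of $F(\beta)=\sum_k a_k\beta^{-k}$; and the verification $e(\beta)=(a_k)$ by checking that $\rho_n=\sum_k a_{n+k}\beta^{-k}$ satisfies the greedy recursion with $\rho_n\in[0;1)$. Your supremum/contraction argument for $S\le 1$ and the separate treatment of the boundary case $\rho_m=1$ are both sound (in the equality analysis one must retain the term $\beta^{-j}\rho_j$ rather than discard it, which you implicitly do when you conclude $\rho_{j(m)}=0$). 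Two remarks concern the statement rather than your proof. First, read literally with $k=1$ the condition $a_ka_{k+1}\cdots<a_1a_2\cdots$ is never satisfied; the intended range is $k\ge 2$, which is how you read it. Second, the sequence $10^\infty$ satisfies the condition for all $k\ge 2$ yet is not $e(\beta)$ for any $\beta>1$ (it would force $\beta=1$), so the ``if'' direction as transcribed has exactly this one degenerate exception; you correctly identify and exclude it, which is more careful than the citation in the paper. No gaps.
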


For detailed treatments of the connections that beta-shifts provide between symbolic dynamics and number theory, see e.g.\ \cite{schmidt, bertrand-mathis,parry,  denker_grillenberger_sigmund,lind_beta}.

%######################################################
\section{Covers of sofic beta-shifts}
\label{sec_beta_covers}

Let $\beta >1$, let $g(\beta) = g_1 g_2 \cdots$ be the generating sequence of $\X_\beta$, and define an infinite labeled graph $\GG_\beta = (G_\beta, \LL_\beta)$ with  vertices $G_\beta^0 = \{ v_i \mid i \in \N \}$ and edges $G^1 = \{ e_i^{k} \mid i \in \N, 0 \leq k \leq g_i \}$ such that  
\begin{displaymath}
s(e_i^k) = v_i, \qquad
 r(e_i^k) = \left \{ \begin{array}{l c l}
      v_{i+1} & ,  & k = g_i \\
      v_1       & , & k < g_i
\end{array} \right.,
\qquad \textrm{and} \qquad
\LL_\beta(e_i^k) = k
\end{displaymath}
for all $i \in \N$ and $0 \leq k \leq g_i$.
It is easy to check that this is a right-resolving presentation of $\X_\beta$.
The labeled graph $(G_\beta, \LL_\beta)$ is called the \emph{standard loop graph presentation} of $\X_\beta$, and it is sketched in  Figure \ref{fig_beta_loop}. Note that the structure of the standard loop graph shows that every beta-shift is irreducible. 

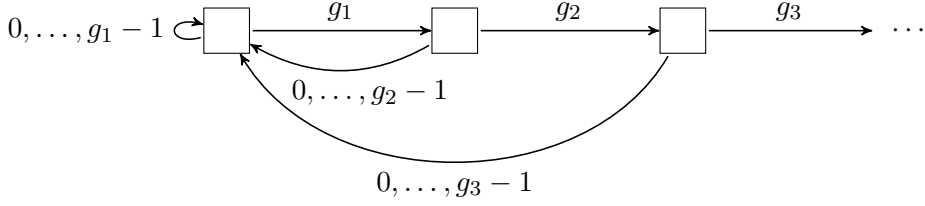
\begin{figure}
\begin{center}
\begin{tikzpicture}
  [bend angle=10,
   clearRound/.style = {circle, inner sep = 0pt, minimum size = 17mm},
   clear/.style = {rectangle, minimum width = 10 mm, minimum height = 6 mm, inner sep = 0pt},  
   greyRound/.style = {circle, draw, minimum size = 1 mm, inner sep =
      0pt, fill=black!10},
   grey/.style = {rectangle, draw, minimum size = 6 mm, inner sep =
      1pt, fill=black!10},
   white/.style = {rectangle, draw, minimum size = 6 mm, inner sep =
      1pt},
   to/.style = {->, shorten <= 1 pt, >=stealth', semithick}]
  
  % Nodes
  \node[white] (P1) at (0,0) {};
  \node[white] (P2) at (3,0) {};   
  \node[white] (P3) at (6,0) {};
  \node[clear] (P4) at (9,0) {$\cdots$};

  % Edges 
  \draw[to] (P1) to node[auto] {$g_1$} (P2); 
  \draw[to] (P2) to node[auto] {$g_2$} (P3);
  \draw[to] (P3) to node[auto] {$g_3$} (P4);

  \draw[to, loop left] (P1) to node[auto] {$0, \ldots , g_1-1$} (P1); 
  \draw[to, bend left = 30] (P2) to node[auto] {\qquad $0, \ldots , g_2-1$} (P1);     
  \draw[to, bend left = 60] (P3) to node[auto] {$0, \ldots , g_3-1$} (P1); 
  
\end{tikzpicture}
\end{center}
\caption[Standard loop graphs of beta-shifts.]{The standard loop graph of a beta-shift with generating sequence $g(\beta) = (g_n)_{n\in \N}$. For the sake of clarity, an edge labeled $0,\ldots,g_{i}-1$ terminating at the leftmost vertex in the figure represents $g_i$ individual edges, each labeled with one of the symbols $0,\ldots,g_{i}-1$.} 
\label{fig_beta_loop}
\end{figure}

%%########
\subsection{Fischer cover}

Parry proved that $\X_\beta$ is an SFT if and only if the generating sequence is periodic \cite{parry}, and Betrand-Mathis  proved that $\X_\beta$ is sofic if and only if the generating sequence is eventually periodic \cite{bertrand-mathis_preprint}. The latter result is apparently only available in a preprint, so an argument for this fact is given as part of the proof of the following Proposition.

\begin{proposition}
\label{prop_beta_sofic}
Given $\beta > 1$, the beta-shift $\X_\beta$ is sofic if and only if the generating sequence $g(\beta)$ is eventually periodic. For minimal $n,p \in \N$ with $g(\beta) = g_1 \cdots g_n (g_{n+1} \cdots g_{n+p})^\infty$, the right Fischer cover of $\X_\beta$ is the labeled graph $(F_\beta, \LL_\beta)$ shown in Figure \ref{fig_beta_rfc} which has $F_\beta^0 = \{ v_1, \ldots v_{n+p} \}$, $F_\beta^1 = \{ e_i^k \mid 1 \leq i \leq n+p, 0 \leq k \leq g_i \}$, and 
\begin{displaymath}
s(e_i^k) = v_i, \textrm{  } 
 r(e_i^k) = \left \{ \begin{array}{l c l}
      v_1       & ,  & k < g_i \\ 
      v_{i+1}  & ,  & k = g_i , i < n+p \\
      v_{n+1} & ,  & k = g_{n+p} , i = n+p      
\end{array} \right.,
\textrm{ and } 
\LL_\beta(e_i^k) = k. 
\end{displaymath}
\end{proposition}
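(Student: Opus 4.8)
The plan is to prove the proposition in two parts: first establish the soficity characterization, then identify the right Fischer cover explicitly. For the first part, I would start from the standard loop graph presentation $\GG_\beta$. If $g(\beta)$ is eventually periodic, say $g(\beta) = g_1 \cdots g_n (g_{n+1} \cdots g_{n+p})^\infty$, then the infinite loop graph can be folded: the vertices $v_{n+1}, v_{n+2}, \ldots$ carry, via Theorem~\ref{thm_renyi}, exactly the same follower sets once the generating sequence starts repeating, because the follower set of $v_i$ in the loop graph is precisely $\{ x^+ \in \X_\beta^+ \mid x^+ \leq g_i g_{i+1} \cdots \}$. Thus identifying $v_{n+1+jp+m}$ with $v_{n+1+m}$ for $0 \le m < p$, $j \ge 0$, yields a finite labeled graph presenting $\X_\beta$, so $\X_\beta$ is sofic. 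Conversely, if $\X_\beta$ is sofic, it has finitely many follower sets of right-rays; but the right-rays $g_i g_{i+1} \cdots$ for $i \in \N$ have follower sets $\{x^+ \leq g_i g_{i+1}\cdots\}$, and by the strict-inequality description in Theorem~\ref{thm_beta_existence} two of these coincide only when the tails $g_i g_{i+1}\cdots$ agree, forcing eventual periodicity of $g(\beta)$.

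For the second part, let $(F_\beta, \LL_\beta)$ be the finite labeled graph obtained by the folding above (which is exactly the graph described in the statement and Figure~\ref{fig_beta_rfc}). I would verify three things: (i) $(F_\beta, \LL_\beta)$ presents $\X_\beta$; (ii) it is follower-separated; and (iii) it is the minimal such presentation, hence the right Fischer cover by the uniqueness in \cite{fischer}. Point (i) follows from the folding construction together with Theorem~\ref{thm_renyi}: a right-ray starting at $v_i$ reads exactly the sequences that are lexicographically $\le g_i g_{i+1}\cdots$ at every shift, and these glue to give all of $\X_\beta^+$. For (ii), the follower set of $v_i$ is $F(v_i) = \{ x^+ \in \X_\beta^+ \mid x^+ \leq g_i g_{i+1} \cdots \}$; I need to check these $n+p$ sets are pairwise distinct. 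For $1 \le i < j \le n+p$ this reduces to showing $g_i g_{i+1}\cdots \neq g_j g_{j+1}\cdots$ as right-rays, which holds by the minimality of the chosen $n$ and $p$ — if two such tails agreed, one could produce a shorter preperiod or period, contradicting minimality. (One must be slightly careful about the wrap-around edge $r(e_{n+p}^{g_{n+p}}) = v_{n+1}$: this is forced precisely because $g_{n+p} g_{n+p+1} \cdots = g_{n+1} g_{n+2} \cdots$.)

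For (iii), I would invoke the standard fact that a right-resolving, follower-separated presentation of an irreducible sofic shift in which every vertex is reachable and co-reachable is the right Fischer cover, and that its vertex set is in bijection with the set of follower sets $\{ F(x^+) \mid x^+ \in \X_\beta^+ \}$ (equivalently, intrinsically synchronizing follower sets). So it suffices to observe that every follower set of a right-ray of $\X_\beta$ equals $F(v_i)$ for some $i$: given $x^+ \in \X_\beta^+$, let $i$ be minimal such that some prefix of $x^+$ is a representative of a path ending at $v_i$ in the loop graph — concretely, the follower set of $x^+$ is determined by "how far up the loop graph $x^+$ currently sits", which after reading finitely many symbols is one of the $v_i$. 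The map $x^+ \mapsto F(v_i)$ is then surjective onto the vertex set of $F_\beta$, and since we have already checked the $F(v_i)$ are distinct, $F_\beta$ has exactly the right number of vertices and must be the Fischer cover.

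The main obstacle I anticipate is the bookkeeping around minimality in step (ii): proving rigorously that the minimal choice of preperiod length $n$ and period length $p$ forces the tails $g_i g_{i+1}\cdots$ for $i = 1, \ldots, n+p$ to be pairwise distinct, and in particular pinning down the behavior of the terminal vertex $v_{n+p}$ and its edge back to $v_{n+1}$. This is where the lexicographic constraints of Theorems~\ref{thm_renyi} and~\ref{thm_beta_existence} (the strict inequality $g_k g_{k+1}\cdots < g_1 g_2 \cdots$ when $e(\beta)$ is infinite, and the analogous statement for the periodic generating sequence) have to be used carefully; a naive argument risks missing the case where a shift of the periodic tail happens to coincide with an earlier non-periodic segment.
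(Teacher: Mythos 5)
Your proposal is correct and takes essentially the same approach as the paper: the finite presentation is obtained by folding the standard loop graph and checking that the result is right-resolving and follower-separated, while the converse uses Krieger's finiteness criterion applied to the sets $\{x^+\in\X_\beta^+ \mid x^+\le g_ig_{i+1}\cdots\}$ (which the paper phrases dually, via predecessor sets of the tails of $g(\beta)$). The minimality bookkeeping you flag --- that minimal $n,p$ force the tails $g_ig_{i+1}\cdots$ for $1\le i\le n+p$ to be pairwise distinct --- is precisely what the paper leaves as ``easy to check'' in verifying follower-separation, so nothing in your plan diverges from the published argument.
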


\noindent
Note that the right Fischer cover $(F_\beta, \LL_\beta )$ of a sofic beta-shift with generating sequence $g(\beta) = g_1 \cdots g_n (g_{n+1} \cdots g_{n+p})^\infty$ is the labeled graph obtained from the subgraph of the standard loop graph $\GG_\beta$ induced by the first $n+p$ vertices $v_1, \ldots , v_{n+p}$ of $G_\beta^0$ by adding an additional edge labeled $g_{n+p}$ from $v_{n+p}$ to $v_{n+1}$. 

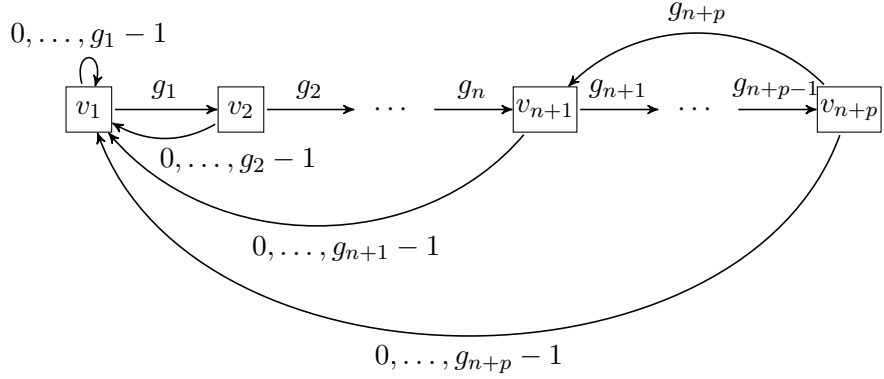
\begin{figure}
\begin{center}
\begin{tikzpicture}
  [bend angle=10,
   clearRound/.style = {circle, inner sep = 0pt, minimum size = 17mm},
   clear/.style = {rectangle, minimum width = 10 mm, minimum height = 6 mm, inner sep = 0pt},  
   greyRound/.style = {circle, draw, minimum size = 1 mm, inner sep =
      0pt, fill=black!10},
   grey/.style = {rectangle, draw, minimum size = 6 mm, inner sep =
      1pt, fill=black!10},
   white/.style = {rectangle, draw, minimum size = 6 mm, inner sep =
      1pt},
   to/.style = {->, shorten <= 1 pt, >=stealth', semithick}]
  
  % Nodes
  \node[white] (v1)     at (0,0) {$v_1$};
  \node[white] (v2)     at (2,0) {$v_2$};
  \node[clear] (dots)   at (4,0) {$\cdots$};
  \node[white] (vn+1)     at (6,0) {$v_{n+1}$};
  \node[clear] (dots2) at (8,0) {$\cdots$}; 
  \node[white] (vn+p)     at (10,0) {$v_{n+p}$};

  % Edges in the Fischer cover
  \draw[to] (v1) to node[auto] {$g_1$} (v2); 
  \draw[to] (v2) to node[auto] {$g_2$} (dots); 
  \draw[to] (dots) to node[auto] {$g_n$} (vn+1);
  \draw[to] (vn+1) to node[auto] {$g_{n+1}$} (dots2);
  \draw[to] (dots2) to node[auto] {$g_{n+p-1}$} (vn+p);  

  \draw[to, loop above] (v1) to node[auto] {$0, \ldots , g_1-1$} (v1); 
  \draw[to, bend left = 30] (v2) to node[auto] {\quad \qquad \qquad $0, \ldots , g_{2}-1$} (v1);    
  \draw[to, bend left = 50] (vn+1) to node[auto] {\qquad $0, \ldots , g_{n+1}-1$} (v1);     
  \draw[to, bend left = 70] (vn+p) to node[auto] {$0, \ldots , g_{n+p}-1$} (v1);     
  \draw[to, bend right = 45] (vn+p) to node[auto,swap] {$g_{n+p}$} (vn+1);     
  
\end{tikzpicture}
\end{center}
\caption[Right Fischer covers of sofic beta-shifts.]{Right Fischer cover of a sofic beta-shift with generating sequence $g(\beta) = g_1 \cdots g_n (g_{n+1} \cdots g_{n+p})^\infty$ for minimal $n,p$.} 
\label{fig_beta_rfc}
\end{figure}

\begin{proof}[Proof of Proposition \ref{prop_beta_sofic}]
Assume that $g(\beta)$ is not eventually periodic and let $v,w$ be finite prefixes of $g(\beta)$ with $v \neq w$. Choose $x_v^+,x_w^+ \in \X_\beta^+$ such that $vx_v^+ = g(\beta) = wx_w^+$.
Since $g(\beta)$ is not eventually periodic, $x_v^+ \neq x_w^+$. Assume without loss of generality that $x_v^+ > x_w^+$. Then $w x_v^+ > wx_w^+ = g(\beta)$, so by Theorem \ref{thm_renyi}, $w x_v^+ \notin \X_\beta^+$. Hence, $P_\infty(x_v^+) \neq P_\infty(x_w^+)$. Since $g(\beta)$ is not eventually periodic, this proves that $\X_\beta$ has infinitely many different predecessor sets, so it is not a sofic shift %(see Section \ref{sec_shift_lkc}).
\cite[\S 2]{krieger_sofic_I}.
The standard loop graph is a right-resolving presentation of $\X_\beta$, so it follows from the observation preceding this proof that $(F_\beta, \LL_\beta )$ is so as well. It is easy to check that $(F_\beta, \LL_\beta)$ is also follower-separated, so it is the right Fischer cover of $\X_\beta$.
% by Theorem \ref{thm_lfc_char}. 
\end{proof}

\begin{lemma}[Johnson {\cite[Proposition 2.5.1]{johnson_thesis}}]
\label{lem_beta_w_unique_r}
Let $\beta > 1$, let $\X_\beta$ be sofic, and let $(F_\beta, \LL_\beta)$ be the right Fischer cover of $\X_\beta$. If there is a path $\lambda \in F_\beta^*$ with $s(\lambda) = v_1$ such that $\LL_\beta(\lambda) = w$ is not a factor of $g(\beta)$, then every path in $F_\beta^*$ labeled $w$ terminates at $r(\lambda)$.
\end{lemma}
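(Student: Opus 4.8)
The plan is to exploit the rigid structure of the right Fischer cover $(F_\beta, \LL_\beta)$ of Figure~\ref{fig_beta_rfc}: every edge other than the ``spine'' edges $e_i^{g_i}$ (among which is the wrap-around edge $e_{n+p}^{g_{n+p}}$) terminates at $v_1$, and the labels read along the spine starting from $v_1$ spell out precisely $g(\beta) = g_1 \cdots g_n (g_{n+1} \cdots g_{n+p})^\infty$. The whole argument is then bookkeeping about where paths visit $v_1$.

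First I record two structural facts. (i) If a path $\nu \in F_\beta^*$ never passes through $v_1$, it uses only spine edges, so its label is $g_i g_{i+1} g_{i+2}\cdots$ read along the spine from $s(\nu) = v_i$; since $g$ is periodic with period $p$ from index $n+1$ on, this is a suffix of $g(\beta)$, hence a factor of $g(\beta)$. Contrapositively, since $w$ is not a factor of $g(\beta)$, \emph{every} path in $F_\beta^*$ labeled $w$ passes through $v_1$. (ii) Because $F_\beta$ is right-resolving, from $v_1$ each prefix of $g(\beta)$ has a unique representative, and it runs along the spine; hence a path starting at $v_1$ whose label is a prefix of $g(\beta)$ runs along the spine and never returns to $v_1$, and conversely a path starting at $v_1$ which never returns to $v_1$ is labeled by a prefix of $g(\beta)$.

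Now let $\nu \in F_\beta^*$ be any path with $\LL_\beta(\nu) = w$. By (i), $\nu$ visits $v_1$; let $t$ be the position of its \emph{last} visit. By (ii), the last $R(\nu) := |w| - t$ edges of $\nu$ carry the length-$R(\nu)$ prefix of $g(\beta)$, so $r(\nu)$ is the unique vertex reached from $v_1$ by reading $g_1 \cdots g_{R(\nu)}$. It thus suffices to show $R(\nu)$ equals the integer $R$, defined to be the largest $r \ge 0$ such that $g_1 \cdots g_r$ is a suffix of $w$ --- a quantity depending on $w$ alone. The bound $R(\nu) \le R$ is immediate. For $R(\nu) \ge R$ (vacuous if $R = 0$), consider the sub-path $\eta$ given by the last $R$ edges of $\nu$; it is labeled $g_1 \cdots g_R$, and the claim is that $s(\eta) = v_1$. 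Suppose instead $s(\eta) = v_j$ with $j \ge 2$. Then $\eta$ cannot visit $v_1$: if $\ell$ were the first such visit, the first $\ell - 1$ edges of $\eta$ would be spine edges carrying $g_j g_{j+1}\cdots g_{j+\ell-2} = g_1 \cdots g_{\ell-1}$, while the $\ell$-th edge would carry a symbol $g_\ell$ strictly less than the spine-label $g_{j+\ell-1}$ at its source, so $g_j g_{j+1} g_{j+2}\cdots > g_1 g_2 g_3 \cdots = g(\beta)$, contradicting the order inequality $g_j g_{j+1}\cdots \le g(\beta)$ (which holds because $g(\beta) \in \X_\beta^+$ --- it is the label of the spine path --- by Theorem~\ref{thm_renyi}). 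Thus $\eta$ avoids $v_1$, which forces all visits of $\nu$ to $v_1$ into its first $|w| - R$ edges, so $t < |w| - R$ and $R(\nu) > R$, contradicting $R(\nu) \le R$. Hence $s(\eta) = v_1$, so $\nu$ is at $v_1$ at position $|w| - R$, and by (ii) it never returns to $v_1$ afterwards, giving $t = |w| - R$ and $R(\nu) = R$. Therefore every path in $F_\beta^*$ labeled $w$ --- in particular $\lambda$ --- terminates at the vertex reached from $v_1$ by reading $g_1 \cdots g_R$, so all of them terminate at $r(\lambda)$.

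The step I expect to be the main obstacle is the claim $s(\eta) = v_1$: a word that is a prefix of $g(\beta)$ can a priori be read starting from many vertices of $F_\beta$, and pinning down the source to $v_1$ is exactly where the order structure of generating sequences (Theorems~\ref{thm_renyi} and~\ref{thm_beta_existence}) is needed --- it forces any reading of $g_1\cdots g_R$ from $v_j$ with $j \ge 2$ to remain on the spine and hence makes it impossible to ``shortcut'' back to $v_1$ along the way.
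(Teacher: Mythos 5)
The paper itself does not prove this lemma --- it attributes it to Johnson and defers a complete proof to \cite[Lemma 4.6]{johansen_thesis} --- so your argument has to stand on its own. On its own terms it is correct and complete in the strictly sofic case $n \geq 1$: there the wrap-around edge $e_{n+p}^{g_{n+p}}$ terminates at $v_{n+1} \neq v_1$, so every edge entering $v_1$ is a non-spine edge whose label is strictly smaller than the spine label at its source, and your lexicographic contradiction goes through (the inequality $g_j g_{j+1} \cdots \leq g(\beta)$ does hold, since $g(\beta)$ is the label of the infinite spine ray from $v_1$ and Theorem \ref{thm_renyi} applies to it). The reduction to the position of the last visit to $v_1$ and the quantity $R$ is a clean way to organize the bookkeeping, and the identification of $s(\eta) = v_1$ as the crux is exactly right.

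The gap is the purely periodic case $n = 0$, which the hypothesis ``$\X_\beta$ sofic'' includes and for which the paper does invoke the lemma (in the first half of the proof of Proposition \ref{prop_beta_2_to_1}, before the periodic/aperiodic case split). When $n = 0$ the wrap-around spine edge $e_{p}^{g_{p}}$ terminates at $v_1$, and two of your steps fail: in fact (ii), a path from $v_1$ labeled by a prefix of $g(\beta)$ of length at least $p$ \emph{does} return to $v_1$; and in the key step, the edge by which $\eta$ first enters $v_1$ may be this spine edge, whose label \emph{equals} the spine label at its source rather than being strictly smaller, so no violation of $g_j g_{j+1}\cdots \leq g(\beta)$ arises. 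The claim $s(\eta) = v_1$ is still true, but it needs an extra argument: if the first entry to $v_1$ were via $e_p^{g_p}$, then $\ell = p - j + 1$ and $g_j \cdots g_p = g_1 \cdots g_{p-j+1}$, whence the suffix $g_j g_{j+1} \cdots$ equals $g_1 \cdots g_{p-j+1}$ followed by $g(\beta)$; combining $g_j g_{j+1}\cdots \leq g(\beta)$ with $g_{p-j+2} g_{p-j+3} \cdots \leq g(\beta)$ then forces $g_{p-j+2} g_{p-j+3}\cdots = g(\beta)$, i.e.\ $g(\beta)$ has period $p - j + 1 < p$, contradicting minimality of $p$. You should either supply this case (and adjust fact (ii) and the comparison of $R(\nu)$ with $R$ accordingly, since in the periodic case $R(\nu) \leq p-1$ while your $R$ need not be) or state explicitly that your proof covers only strictly sofic $\X_\beta$.
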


\noindent
Some cases were left unchecked in the proof in \cite{johnson_thesis}, but a complete proof is given in \cite[Lemma 4.6]{johansen_thesis}.

\begin{proposition}
\label{prop_beta_2_to_1}
Let $\beta > 1$ with eventually periodic $g(\beta)$, let $(F_\beta, \LL_\beta)$ be the right Fischer cover  of $\X_\beta$, and let $\pi \colon \X_{F_\beta} \to \X_\beta$ be the covering map. If $g(\beta)$ is periodic, then $\pi$ is $1$ to $1$, and if not, then it is $2$ to $1$.
\end{proposition}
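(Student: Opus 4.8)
The plan is to analyze the covering map $\pi$ by counting, for a fixed right-ray $x^+ \in \X_\beta^+$, the number of paths in $F_\beta^*$ that present longer and longer prefixes of $x^+$ and start at some fixed vertex. Since the Fischer cover is right-resolving, a presenting bi-infinite path is determined by its sequence of \emph{source} vertices, and by compactness the fiber $\pi^{-1}(x)$ over a point $x \in \X_\beta$ is in bijection with the set of bi-infinite vertex sequences compatible with the labeling. The key structural fact I would exploit is Lemma \ref{lem_beta_w_unique_r}: whenever a prefix $w$ of $x^+$ fails to be a factor of $g(\beta)$, every path labeled $w$ from $v_1$ ends at the same vertex, which pins down the state and hence forces $\pi$ to be at most $2$ to $1$ — the only ambiguity that can persist is between the "loop-graph copy" of a word (paths passing through $v_1$) and the "periodic-tail copy" (paths staying in the periodic block $v_{n+1} \cdots v_{n+p}$), as visible from the shape of the graph in Figure \ref{fig_beta_rfc}.

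First I would treat the periodic case. If $g(\beta) = (g_1 \cdots g_p)^\infty$ is periodic, then $n = 0$ and the Fischer cover has vertex set $\{v_1, \ldots, v_p\}$ with the extra edge labeled $g_p$ going from $v_p$ back to $v_1$; here every edge labeled $k < g_i$ returns to $v_1$ and the graph is easily checked to be such that the labeling is injective on edges out of the whole vertex set in a way that makes $\pi$ injective. Concretely, $\X_\beta$ is then an SFT (by Parry) and its Fischer cover is its minimal right-resolving presentation, which for an SFT equals the (reduced) edge shift presentation, so $\pi$ is a conjugacy and in particular $1$ to $1$; alternatively one checks directly that any two bi-infinite paths with the same label must agree vertex-by-vertex because the only way to "re-enter" the cycle is through $v_1$ and the labels force which iterate of the period one is in.

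For the strictly sofic case ($g(\beta)$ eventually periodic but not periodic, so $n \geq 1$ minimal), I would argue $\pi$ is exactly $2$ to $1$ in two steps. \emph{At most $2$ to $1$:} Given $x \in \X_\beta$, look at the right-ray $x^+$. If $x^+$ does not have $g(\beta)$ as a suffix-shift in the strong sense needed, then some prefix $w$ of $x^+$ is not a factor of $g(\beta)$ (this uses minimality of $n$ and the aperiodicity of the tail together with Theorem \ref{thm_renyi}); Lemma \ref{lem_beta_w_unique_r} then forces the source vertex of the edge reading the last symbol of $w$ to be uniquely determined, and right-resolvingness propagates this uniqueness forward, so the forward half of any presenting path is determined; since $v_1$ receives every "short" edge, the backward half is then also forced, giving $|\pi^{-1}(x)| = 1$. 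If instead $x^+$ consists entirely of factors of $g(\beta)$ — the generic situation being $x^+$ a shift of $g(\beta)$ itself — then there are exactly two presenting forward paths from the relevant entry point: one running down the "spine" $v_1 \to v_2 \to \cdots$ and one sitting in the periodic loop $v_{n+1} \to \cdots \to v_{n+p} \to v_{n+1}$, because $g_{i} = g_{n+ ((i - n - 1) \bmod p) + 1}$ on the overlap makes these two labelings coincide. \emph{At least $2$ to $1$ somewhere:} exhibit one such $x$ — e.g. take $x^+ = g_{n+1} g_{n+2} \cdots$ (the periodic tail itself) preceded by a suitable left-ray reaching both $v_1$-spine and $v_{n+1}$-loop — and check that both the spine path and the loop path present it, so $|\pi^{-1}(x)| \geq 2$; combined with the upper bound this gives exactly $2$. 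Since $\pi$ is a factor map between irreducible sofic shifts with the same Fischer cover on the base, a standard degree argument (the generic fiber size is constant and equals the maximum) upgrades "$\leq 2$ everywhere, $= 2$ somewhere" to "$= 2$ on a residual set," which is the meaning of $2$ to $1$ in this context.

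The main obstacle I expect is the careful bookkeeping in showing the fiber is \emph{at most} $2$, specifically ruling out a third presenting path. The danger is a path that dips out of the periodic loop back through $v_1$ and re-enters the loop at a shifted phase, producing a distinct vertex sequence with the same label. Handling this requires using Theorem \ref{thm_renyi} and Parry's condition (Theorem \ref{thm_beta_existence}) to show that any such "re-entry" forces the label to strictly exceed $g(\beta)$ at some position — hence leave the language — unless the path coincides with one of the two canonical ones; this is exactly where the aperiodicity of the tail of $g(\beta)$ and the minimality of $n$ must be invoked, and it is the delicate combinatorial heart of the argument. Everything else — right-resolvingness, the structure of the two canonical paths, and the degree argument — is routine.
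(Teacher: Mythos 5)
Your overall strategy---bound the fiber size by $2$ using Lemma \ref{lem_beta_w_unique_r} and right-resolvingness, then exhibit a point with two preimages---matches the paper's in outline, but your ``at most $2$'' step is anchored on the wrong side of the point and contains a false deduction. You argue that if some prefix $w$ of the right-ray $x^+$ fails to be a factor of $g(\beta)$, then the forward half of any presenting path is determined and ``the backward half is then also forced,'' giving $\lvert \pi^{-1}(x)\rvert = 1$. That last inference is not valid: the Fischer cover is right-resolving but not left-resolving, so agreement of two presentations from some time onward says nothing about their pasts. Concretely, take $g(\beta)=11(10)^\infty$ as in Example \ref{ex_beta_2_sofic} and the point $x$ with left-ray $\cdots 101010$ and right-ray $000\cdots$. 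The right-ray contains $00$, which is not a factor of $g(\beta)$, yet $x$ has two presentations: one whose past winds around the circuit $v_3\to v_4\to v_3$ and exits to $v_1$ on the first $0$, and one whose past winds around $v_1\to v_2\to v_1$ and then loops at $v_1$. So your criterion for singleton fibers is false, and dually your identification of the two-point fibers with points whose right-ray is built from factors of $g(\beta)$ is also wrong: the multiplicity is governed entirely by the left tail, namely by whether $\cdots x_{k-1}x_k=(g_{n+1}\cdots g_{n+p})^\infty$ for some $k$.

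The paper's proof is correspondingly a left-infinite argument: for a presenting path $\lambda$ it considers the set $A$ of times $j$ admitting $i<j$ with $\LL_\beta(\lambda_{[i,j]})$ not a factor of $g(\beta)$, and the set $B$ of times at which $\lambda$ visits $v_1$. If either is bounded below, the left tail of $x$ is $\per^\infty$; if both are unbounded below, then Lemma \ref{lem_beta_w_unique_r}, applied to words read from $v_1$ arbitrarily far in the past, forces $r(\mu_j)=r(\lambda_j)$ for any other presentation $\mu$ at arbitrarily early times, and right-resolvingness then gives $\mu=\lambda$. The exact count of two preimages over points with left tail $\per^\infty$ then reduces to counting circuits labeled $\per$: exactly one avoiding $v_1$, and (in the strictly sofic case) exactly one through $v_1$, whose uniqueness is pinned down by locating the unique phase at which the cyclic permutation of $\per$ is lexicographically maximal (using minimality of $p$) and invoking right-resolvingness. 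This last point is precisely the ``delicate combinatorial heart'' you flagged but deferred. As written, your proposal both misidentifies the two-preimage locus and leaves that uniqueness unproved, so the argument does not go through without being restructured around the left tail.
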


\begin{proof}
Let $g(\beta) =  g_1 \cdots g_n (g_{n+1} \cdots g_{n+p})^\infty$ for minimal $n,p \in \N$, and let $\per = g_{n+1} \cdots g_{n+p}$. The first goal is to prove that if $x \in \X_\beta$ has more than one presentation in $(F_\beta, \LL_\beta)$, then there exits $k \in \Z$ such that $\cdots x_{k-1} x_k = \per^\infty$. 

Let $x \in \X_\beta$, and let $\lambda$ be a biinfinite path in $F_\beta$ with $\LL_\beta(\lambda) = x$. 
If there is a lower bound $l$ on the set 
\begin{displaymath}
A = \{ j \in \Z \mid \exists i < j : \LL_\beta(\lambda_{[i,j]}) \textrm{ is not a factor of } g(\beta) \},
\end{displaymath}
then there exists $k \leq l$ such that $\cdots x_{k-1} x_k = \per^\infty$ since $x$ is biinfinite while $g(\beta)$ is not.
By Proposition \ref{prop_beta_sofic},
the only circuit in $(F_\beta, \LL_\beta)$ that does not pass through $v_1$ is labeled $\per$, so if there is a lower bound $l$ on the set $B = \{ i \in \Z \mid r(\lambda_i) = v_1 \}$, then there exists $k < l$ such that $\cdots x_{k-1} x_k = \per^\infty$.
Assume that both $A$ and $B$ are unbounded below, let $\mu \in \X_{F_\beta}$ with $\LL(\mu) = x$, and let $k \in \Z$. Then there exist $i < j < k$ with $s(\lambda_i) = v_1$ such that $w = \LL_\beta(\lambda_{[i,j]})$ is not a factor of $g(\beta)$. By Lemma \ref{lem_beta_w_unique_r}, $r(\mu_j) = r(\lambda_j)$, and $(F_\beta, \LL_\beta)$ is right-resolving, so this implies that $\mu_k = \lambda_k$. Since $k$ was arbitrary, $\lambda = \mu$, and this proves the claim.
 
If $g(\beta)$ is periodic, then every circuit in $F_\beta$ passes through $v_1$, and only one of these is labeled $\per$ since $p$ is the minimal period. Hence, there is precisely one vertex in $F^0$ where a presentation of $\per^\infty$ can end, so $\pi_\beta$ is $1$ to $1$ and $\X_\beta$ is an SFT.

Assume that $g(\beta)$ is eventually periodic without being periodic. Then $\per < g(\beta)_{[1,p]}$, so there is a circuit $\mu$ in $F_\beta$ passing through $v_1$ with $\LL_\beta(\mu) = \per$. Choose $0 \leq i \leq p$ such that $\per_i \cdots \per_p \per_1 \cdots \per_{p-1}$ is maximal among the cyclic permutations of the letters of $\per$. The number $i$ is unique because $p$ is the minimal period. Now $s(\mu_i) = v_i$, so $\mu$ is unique because $(F_\beta, \LL_\beta)$ is right-resolving.
The only circuit that does not pass through $v_1$ is also labeled $\per$, so $\pi_\beta$ is $2$ to $1$. 
\end{proof}

\begin{figure}
\begin{center}
\begin{tikzpicture}
  [bend angle=10,
   clearRound/.style = {circle, inner sep = 0pt, minimum size = 17mm},
   clear/.style = {rectangle, minimum width = 17 mm, minimum height = 6 mm, inner sep = 0pt},  
   greyRound/.style = {circle, draw, minimum size = 1 mm, inner sep =
      0pt, fill=black!10},
   grey/.style = {rectangle, draw, minimum size = 6 mm, inner sep =
      1pt, fill=black!10},
   white/.style = {rectangle, draw, minimum size = 6 mm, inner sep =
      1pt},
   to/.style = {->, shorten <= 1 pt, >=stealth', semithick}]
  
  % Nodes
  \node[white] (P1) at (0,0) {};
  \node[white] (P2) at (2,0) {};   
  \node[white] (P3) at (4,0) {};
  \node[white] (P4) at (6,0) {};

  % Edges in the Fischer cover
  \draw[to] (P1) to node[auto] {$1$} (P2); 
  \draw[to] (P2) to node[auto] {$1$} (P3); 
  \draw[to] (P3) to node[auto] {$1$} (P4);

  \draw[to, loop left] (P1) to node[auto] {$0$} (P1);
  \draw[to, bend left = 30] (P2) to node[auto] {$0$} (P1);
  \draw[to, bend left = 60] (P3) to node[auto] {$0$} (P1);
  \draw[to, bend right = 45] (P4) to node[auto,swap] {$0$} (P3);  
   
\end{tikzpicture}
\end{center}
\caption[Right Fischer cover of a sofic beta-shift.]{Right Fischer cover of the sofic beta-shift from Example \ref{ex_beta_2_sofic}.} 
\label{fig_beta_2_sofic}
\end{figure}
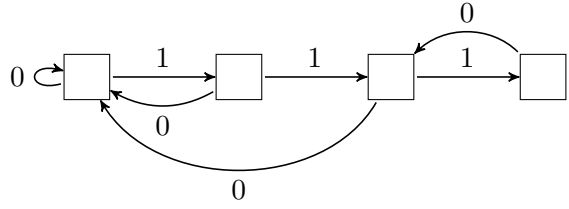

\begin{example}
\label{ex_beta_2_sofic}
Use Theorem \ref{thm_beta_existence} to choose $\beta > 1$ such that $g(\beta) = 11(10)^\infty$. The right Fischer cover of $\X_\beta$ is shown in Figure \ref{fig_beta_2_sofic}. Note that there are two presentations of e.g.\ the biinfinite sequence $(10)^\infty$.
\end{example}

The following result was proved by Parry \cite{parry}, but it is repeated here since it follows immediately from Propositions \ref{prop_beta_sofic} and \ref{prop_beta_2_to_1}.% and Corollary \ref{cor_lfc_conj}.

\begin{corollary}
\label{cor_beta_sft}
For $\beta > 1$, the beta-shift $\X_\beta$ is an SFT if and only if the generating sequence $g(\beta)$ is periodic.
\end{corollary}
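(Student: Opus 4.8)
The plan is to read the corollary off from Propositions~\ref{prop_beta_sofic} and \ref{prop_beta_2_to_1} together with the standard fact that an irreducible sofic shift is an SFT if and only if the covering map of its right Fischer cover is a conjugacy. First I would handle the non-sofic case separately: if $g(\beta)$ is not eventually periodic then $\X_\beta$ is not sofic by Proposition~\ref{prop_beta_sofic}, hence not an SFT, and $g(\beta)$ is in particular not periodic, so the asserted equivalence holds vacuously there. From now on assume $g(\beta)$ is eventually periodic, let $(F_\beta,\LL_\beta)$ be the right Fischer cover of $\X_\beta$, and let $\pi\colon\X_{F_\beta}\to\X_\beta$ be the covering map.

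For the implication ``$g(\beta)$ periodic $\Rightarrow$ $\X_\beta$ is an SFT'' I would appeal to Proposition~\ref{prop_beta_2_to_1}, which gives that $\pi$ is $1$ to $1$. Since $\pi$ is a continuous, shift-commuting bijection (surjective as a covering map, injective by the proposition) and $\X_{F_\beta}$ is compact, $\pi$ is a conjugacy; as $\X_{F_\beta}$ is the edge shift of the finite graph $F_\beta$ it is an SFT, and since the SFT property is a conjugacy invariant, $\X_\beta$ is an SFT as well.

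For the converse, suppose $\X_\beta$ is an SFT. Then $\X_\beta$ is sofic, so $g(\beta)$ is eventually periodic by Proposition~\ref{prop_beta_sofic}. If $g(\beta)$ were not periodic, Proposition~\ref{prop_beta_2_to_1} would force $\pi$ to be $2$ to $1$, so $(F_\beta,\LL_\beta)$ would not present $\X_\beta$ via a conjugacy; this contradicts the fact that the right Fischer cover of an SFT is always a conjugacy. Hence $g(\beta)$ is periodic.

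The only ingredient that is not entirely formal is the quoted fact that the right Fischer cover of an (irreducible) SFT is a conjugacy, and this is where I expect the one mild obstacle to lie. I would either cite it as standard (see e.g.\ \cite{lind_marcus,fischer}) or give the short direct argument: if $\X_\beta$ is $M$-step then its $M$-block presentation is a conjugacy, the right Fischer cover is obtained from it by collapsing states with equal follower sets, and one verifies that along any label sequence the Fischer-cover path visits the collapse class of the block-presentation state at every coordinate, so it is unique. Checking that this collapsing cannot create a second bi-infinite path carrying the same label is the only slightly technical point; everything else in the corollary is immediate from the two propositions and the compactness argument above.
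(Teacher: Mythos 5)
Your proof is correct and follows essentially the same route as the paper, which simply observes that the corollary follows immediately from Propositions \ref{prop_beta_sofic} and \ref{prop_beta_2_to_1} (indeed, the periodic case is already asserted inside the proof of Proposition \ref{prop_beta_2_to_1}). The one ingredient you make explicit --- that the covering map of the right Fischer cover of an irreducible SFT is a conjugacy --- is exactly the standard fact the paper leaves implicit, and your sketch of it via the $M$-step/follower-set argument is sound.
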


\begin{comment}
\begin{proof}
This follows immediately from Propositions \ref{prop_beta_sofic} and \ref{prop_beta_2_to_1}.
\end{proof}
\end{comment}

%#######
\subsection{Krieger cover}

\begin{proposition}
\label{prop_beta_krieger}
If $\beta > 1$ and the beta-shift $\X_\beta$ is sofic, then the right Krieger cover of $\X_\beta$ is identical to the right Fischer cover $(F_\beta, \LL_\beta)$ of $\X_\beta$.
\end{proposition}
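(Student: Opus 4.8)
The plan is to show that the right Fischer cover already has the defining property of the right Krieger cover, so that the canonical minimality of the Krieger construction forces the two to coincide. Recall that the right Krieger cover is built from the predecessor sets $P^\infty(x^+)$ of right-rays: its vertices are the distinct sets $P^\infty(x^+)$, and there is an edge labeled $a$ from $P^\infty(x^+)$ to $P^\infty(ax^+)$ whenever $ax^+ \in \X_\beta^+$. The Fischer cover is the subgraph of the Krieger cover on the predecessor sets that are \emph{intrinsically synchronizing} (equivalently, those that arise as $P^\infty(x^+)$ for an $x^+$ in the irreducible core), so what must be shown is that \emph{every} predecessor set of $\X_\beta$ occurs already as one of the $n+p$ vertices of $(F_\beta, \LL_\beta)$; there are no extra Krieger vertices.

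First I would use the standard loop graph presentation together with Proposition \ref{prop_beta_sofic}: reading a right-ray $x^+ = x_1 x_2 \cdots$ through $\GG_\beta$ starting at $v_1$, one sees that the state reached after $x_1 \cdots x_m$ depends only on the length of the maximal suffix of $x_1 \cdots x_m$ that is a prefix of $g(\beta)$. Concretely, if $x_{m-j+1} \cdots x_m = g_1 \cdots g_j$ with $j$ maximal, then the unique state of $\GG_\beta$ is $v_{j+1}$, and one checks that $P^\infty(x^+)$ is exactly the predecessor set attached to vertex $v_{j+1}$ — i.e.\ the set of left-rays $y^-$ such that $y^- x^+ \in \X_\beta$, which by Theorem \ref{thm_renyi} is governed precisely by how $y^-$ may be continued by a prefix of $g(\beta)$ of length $\geq$ the relevant threshold. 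The key input here is Lemma \ref{lem_beta_w_unique_r}: once a word $w$ read from $v_1$ fails to be a factor of $g(\beta)$, every presentation of $w$ ends at the same vertex, which pins down $P^\infty(x^+)$ unambiguously in terms of the Fischer-cover vertex. Then I would observe that in the eventually periodic case $g(\beta) = g_1\cdots g_n(g_{n+1}\cdots g_{n+p})^\infty$, the quantity $j$ ranges over only finitely many effective values: once $j \geq n$, increasing $j$ by $p$ gives the same predecessor set (this is the identification $v_{n+p} \to v_{n+1}$ that turns $\GG_\beta$ into $F_\beta$), so there are at most $n+p$ distinct predecessor sets, one per vertex $v_1,\dots,v_{n+p}$.

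Conversely I would check these $n+p$ predecessor sets are pairwise distinct — for $v_i$ with $i \le n+p$ exhibit a left-ray realizing $v_i$ but not $v_{i'}$ for $i' \ne i$, using that distinct prefix-lengths of $g(\beta)$ impose genuinely different constraints (here Theorem \ref{thm_beta_existence}, $g_k g_{k+1}\cdots \le g_1 g_2 \cdots$, guarantees the relevant words behave as expected) — so the Krieger cover has exactly $n+p$ vertices and exactly the same edges and labels as $F_\beta$. Since the right Fischer cover is follower-separated and right-resolving (Proposition \ref{prop_beta_sofic}) and now seen to present every predecessor set, it \emph{is} the right Krieger cover. The main obstacle is the bookkeeping in the first step: correctly identifying the predecessor set of an arbitrary right-ray with the predecessor set of the corresponding loop-graph vertex, and handling the boundary between the preperiod $v_1,\dots,v_n$ and the period $v_{n+1},\dots,v_{n+p}$ so that the wrap-around edge $v_{n+p}\to v_{n+1}$ is exactly right; the clean way to do this is to route everything through Lemma \ref{lem_beta_w_unique_r} rather than arguing directly about left-rays.
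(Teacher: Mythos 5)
Your high-level strategy (show the Krieger cover has no vertices beyond those of the Fischer cover) is the right one, but the execution has two genuine problems. First, you have built the wrong cover. The right Krieger cover that the proposition compares with the right Fischer cover is the right-resolving cover whose vertices are the follower sets $F^\infty(x^-)=\{y^+ \mid x^-y^+\in\X_\beta\}$ of \emph{left}-rays, with an edge labelled $a$ from $F^\infty(x^-)$ to $F^\infty(x^-a)$; it contains the right Fischer cover as the subgraph on the follower sets of synchronizing words. The object you describe --- vertices the predecessor sets $P^\infty(x^+)$ of right-rays --- is the \emph{left} Krieger cover (it is left-resolving and naturally contains the left Fischer cover), so even if completed your argument would prove a different statement. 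Relatedly, your key computational step does not compute what you claim: the vertex of $\GG_\beta$ reached after reading a prefix $x_1\cdots x_m$ from $v_1$ records follower-set data of that prefix, whereas $P^\infty(x^+)$ is the union of $P_\infty(v)$ over all vertices $v$ from which the \emph{entire} ray $x^+$ can be read; these are different pieces of information (for $g(\beta)=1(10)^\infty$, say, the ray $(10)^\infty$ can be read from $v_1$ and $v_2$ but not $v_3$, while $0(10)^\infty$ can be read from all three vertices).

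Second, and independently of conventions, you skip the one point where something could actually fail. In the correct (left-ray) framework one has $F^\infty(x^-)=\bigcup_{v\in r(x^-)}F_\infty(v)$, where $r(x^-)$ is the set of Fischer-cover vertices at which presentations of $x^-$ terminate. By the proof of Proposition \ref{prop_beta_2_to_1} most left-rays have $\lvert r(x^-)\rvert=1$ and contribute nothing new; but the rays $x^-=(g_{n+1}\cdots g_{n+p})^\infty g_{n+1}\cdots g_{n+k}$ have $r(x^-)=\{v_i,v_j\}$ with two distinct terminal vertices (one reached by a circuit through $v_1$, one not), and a priori $F_\infty(v_i)\cup F_\infty(v_j)$ is a \emph{new} Krieger vertex. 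The entire content of the paper's proof is that this union collapses because $F_\infty(v_j)\subseteq F_\infty(v_i)$, i.e.\ these rays are $1$-synchronizing in the sense of \cite{johansen_structure}. Your count ``at most $n+p$ distinct predecessor sets, one per vertex'' asserts the conclusion at exactly this point without justifying it.
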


\begin{proof}
Let $n,p$ be minimal such that $g(\beta) = g_1 \cdots g_n(g_{n+1} \cdots g_{n+p})^\infty$ and let $F_\beta^0 = \{ v_1 , \ldots, v_{n+p} \}$ as in Proposition \ref{prop_beta_sofic}.
Let $x^- \in \X_\beta^-$, and let $r(x^-) \subseteq F_\beta^0$ be the set of vertices where a presentation of $x^-$ can end. The goal is to prove that $x^-$ is $1$-synchronizing (see \cite{johansen_structure}).

Assume first that $x^- = (g_{n+1} \cdots g_{n+p})^\infty g_{n+1} \cdots g_{n+k}$ for some $k \leq p$. By the proof of Proposition \ref{prop_beta_2_to_1}, there is a unique $i$ such that there is a circuit labeled $\per' = g_{n+k+1} \cdots g_{n+p} g_{n+1} \cdots g_{n+k}$ passing through $v_1$ and terminating at $v_i$. Similarly, there is a unique $j \neq i$  such that there is a circuit labeled $\per'$ which terminates at $v_j$ without passing through $v_1$. Now, $r(x^-) = \{ v_i, v_j \}$ and $F_\infty(v_j) \subseteq F_\infty(v_i)$, so $x^-$ is $1$-synchronizing.

If $x^-$ is not of the form considered above, then the proof of Proposition \ref{prop_beta_2_to_1} shows that $\lvert r(x^-) \rvert = 1$, so $x^-$ is $1$-synchronizing. Hence, the right Krieger cover is equal to the right Fischer cover by \cite[Section 4]{johansen_structure}.
\end{proof}

\noindent
This result also follows from \cite{katayama_matsumoto_watatani} where it is shown that the Matsumoto algebra associated to $\X_\beta$ is simple.

\subsection{Fiber product}
By Proposition \ref{prop_beta_2_to_1}, the covering map of the Fischer cover of a sofic beta-shift is $2$ to $1$. There is an ongoing effort to classify irreducible $2$-sofic shifts up to flow equivalence via flow equivalence of certain derived reducible shift spaces equipped with an action of $\Z / 2\Z$ \cite{boyle_carlsen_eilers_isotopy,boyle_carlsen_eilers_sofic}. These tools are not yet general enough to be applied to beta-shifts, but the present work was motivated by a desire to pave the way for such an investigation.
This section contains an introduction to fiber products and a construction of the right fiber product covers of sofic beta-shifts.

\begin{definition}
\index{fiber product graph}
Let $X$ be an irreducible sofic shift and let $(F, \LL_F)$ be the right Fischer cover of $X$. The \emph{(right) fiber product graph} of $X$ is defined to be the labeled graph with vertex set $\{ (u,v) \mid u,v \in F^0 \}$ where there is an edge labeled $a$ from $(u_1, v_1)$ to $(u_2, v_2)$ if and only if there are edges labeled $a$ from $u_1$ to $u_2$ and from $v_1$ to $v_2$ in $F$.
\end{definition}

Let $X$ be a sofic shift over $\AA$, let $(F, \LL_F)$ be the right Fischer cover of $X$, let $n = \lvert F^0 \rvert$, and let $A$ be the corresponding symbolic adjacency matrix.
Then the symbolic adjacency matrix of the fiber product graph is $(B(i,j))_{1 \leq i,j \leq n}$ where $B(i,j)$ is the $n \times n$ matrix over formal sums over $\AA$ obtained from $A$ by omitting all symbols not appearing in the entry $A_{ij}$.

The fiber product graph of $X$ is a presentation of $X$, and it contains the right Fischer cover as the subgraph induced by the diagonal vertices $\{ (v,v) \mid v \in F^0 \}$. The fiber product graph is generally not essential, so it is often useful to pass to the maximal essential subgraph. This subgraph will be called the \emph{fiber product cover} in the following.

Let $X$ be a sofic shift with right Fischer cover $(F, \LL_F)$ such that the covering map $\pi \colon \X_F \to X$ is $2$ to $1$, and let $(P, \LL_P)$ be the fiber product cover of $X$. 
Let $\lambda \in \X_P$ be a biinfinite sequence, let $i \in \Z$, let $s(\lambda_i) = (u_i,v_i) \in P^0$, and let $ r(\lambda_i) = (u_{i+1},v_{i+1}) \in P^0$. Then $(v_i,u_i), (v_{i+1},u_{i+1}) \in P^0$ as well, and there is a unique edge labeled $\LL_P(\lambda_i)$ from $(v_i,u_i)$ to $(v_{i+1},u_{i+1})$. This defines a continuous and shift commuting map $\varphi \colon \X_P \to \X_P$ with $\varphi^2 = \id$. In this way, the labeling induces a continuous and shift commuting $\Z / 2\Z$ action on the edge shift $\X_P$. This is said to be the $\Z / 2\Z$ action on $\X_P$ \emph{induced by the labels}. This also induces a corresponding continuous $\Z / 2\Z$ action on the suspension $S\X_P$. See \cite{boyle_sullivan} for an investigation of equivariant flow equivalence of shift spaces equipped with group actions.

\begin{example}
\label{ex_beta_fiber}
Use Theorem \ref{thm_beta_existence} to find $1 < \beta < 2$ such that $g(\beta) = 1(10)^\infty$. The symbolic adjacency matrix of the right Fischer cover of the corresponding beta-shift $\X_\beta$ is
\begin{displaymath}
\left( \begin{array}{ c c c }
0 & 1 &      \\
0 &    & 1   \\
   & 0 &      \\
\end{array} \right), 
\end{displaymath}
where a blank entry signifies that there is no edge between the two vertices.
%The symbolic adjacency matrix of the fiber product graph of $\X_\beta$ is
%\begin{equation}
%\label{eq_beta_fiber_ex_adj}
%A = \left( \begin{array}{ c c c | c c c | c c c }
%0 &    & &           &1 &    &               & &    \\
%0 &    & &           &   & 1 &              & &    \\
%   & 0 & &           &   &    &              & &    \\
%\hline   
%0 &    & &           &   &    &               &  1 &       \\
%0 &    & &           &   &    &               &     & 1    \\
%   & 0 & &           &   &    &               &     &       \\
%\hline
%   &    & &        0 &    &   &                & &    \\
%   &    & &        0 &    &   &                & &    \\
%   &    & &           & 0 &   &                & &    \\
%\end{array} \right)
%\begin{array}{l}
%(1,1) \\
%(1,2) \\
%(1,3) \\
%(2,1) \\
%(2,2) \\
%(2,3) \\
%(3,1) \\
%(3,2) \\
%(3,3) 
%\end{array}.
%\end{equation} 
%By disregarding stranded vertices, this can be reduced to 
%\begin{displaymath}
%A' = \left( \begin{array}{ c c | c c c | c c }
%0 &    &    & 1 &    &    &    \\
%0 &    &    &    & 1 &    &    \\
%\hline
%0 &    &    &    &    & 1 &    \\
%0 &    &    &    &    &    & 1 \\
%   & 0 &    &    &    &    &    \\
%\hline
%0 &    & 0 &    &    &    &    \\
%0 &    &    & 0 &    &    &    \\
%\end{array} \right)
%\begin{array}{l}
%(1,1) \\
%(1,2) \\
%(2,1) \\
%(2,2) \\
%(2,3) \\
%(3,2) \\
%(3,3) \\
%\end{array}.
%\end{displaymath}
%Figure \ref{fig_beta_fiber_ex} shows this fiber product cover. Note that it contains the right Fischer cover as the connected component containing the diagonal vertices.
It is straightforward to check that the labeled graph in Figure \ref{fig_beta_fiber_ex} is the fiber product cover of this beta-shift.
\end{example}

\begin{figure}
\begin{center}
\begin{tikzpicture}
  [bend angle=10,
   clearRound/.style = {circle, inner sep = 0pt, minimum size = 17mm},
   clear/.style = {rectangle, minimum width = 10 mm, minimum height = 6 mm, inner sep = 0pt},  
   greyRound/.style = {circle, draw, minimum size = 1 mm, inner sep =
      0pt, fill=black!10},
   grey/.style = {rectangle, draw, minimum size = 6 mm, inner sep =
      1pt, fill=black!10},
   white/.style = {rectangle, draw, minimum size = 6 mm, inner sep =
      1pt},
   to/.style = {->, shorten <= 1 pt, >=stealth', semithick}]
  
  % Nodes
  \node[white] (P1) at (0,0) {$(1,1)$};
  \node[white] (P2) at (2,0) {$(2,2)$};   
  \node[white] (P3) at (4,0) {$(3,3)$};

  \node[white] (P12) at (0,1.5) {$(1,2)$};
  \node[white] (P23) at (2,1.5) {$(2,3)$};   

  \node[white] (P21) at (0,-1.5) {$(2,1)$};
  \node[white] (P32) at (2,-1.5) {$(3,2)$};

  % Edges 
  \draw[to] (P1) to node[auto] {$1$} (P2); 
  \draw[to, loop left] (P1) to node[auto] {$0$} (P1);   
  \draw[to] (P2) to node[auto] {$1$} (P3);
  \draw[to, bend left = 30] (P2) to node[auto] {$0$} (P1);  
  \draw[to, bend right = 45] (P3) to node[auto,swap] {$0$} (P2);    

  \draw[to,bend left] (P12) to node[auto] {$1$} (P23); 
  \draw[to,bend left] (P23) to node[auto] {$0$} (P12);
  \draw[to] (P12) to node[auto,swap] {$0$} (P1);   
   
  \draw[to,bend left] (P21) to node[auto] {$1$} (P32); 
  \draw[to,bend left] (P32) to node[auto] {$0$} (P21);
  \draw[to] (P21) to node[auto] {$0$} (P1);   

\end{tikzpicture}
\end{center}
\caption[Fiber product cover of a sofic beta-shift.]{Fiber product cover of the sofic shift from Example \ref{ex_beta_fiber}.} 
\label{fig_beta_fiber_ex}
\end{figure}
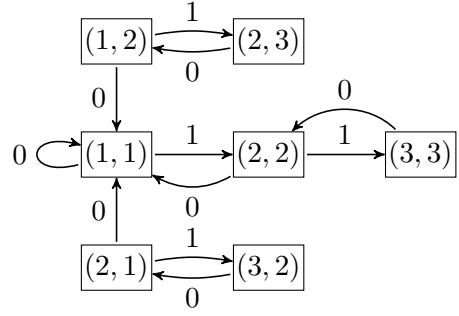

\begin{figure}
\begin{center}
\begin{tikzpicture}
 [bend angle=10,
   clearRound/.style = {circle, inner sep = 0pt, minimum size = 17mm},
   clear/.style = {rectangle, minimum width = 10 mm, minimum height = 6 mm, inner sep = 0pt},  
   greyRound/.style = {circle, draw, minimum size = 1 mm, inner sep =
      0pt, fill=black!10},
   grey/.style = {rectangle, draw, minimum size = 6 mm, inner sep =
      1pt, fill=black!10},
   white/.style = {rectangle, draw, minimum size = 6 mm, inner sep =
      1pt},
   to/.style = {->, shorten <= 1 pt, >=stealth', semithick}]
  
  % Nodes
  \node[white] (v1)     at (0,0) {$v_1$};
  \node[white] (v2)     at (1.5,0) {$v_2$};
  \node[clear] (dots)   at (3,0) {$\cdots$};
  \node[white] (vn+1) at (5,0) {$v_{n+1}$};
  \node[clear] (dots2) at (7,0) {$\cdots$}; 
  \node[white] (vn+p) at (9,0) {$v_{n+p}$};

  \node[white] (vn+1')     at (0,3) {$v_{n+1}'$};
  \node[clear] (dots2')     at (2,3) {$\cdots$}; 
  \node[white] (vn+p')     at (4,3) {$v_{n+p}'$};

  \node[white] (vn+1'')     at (0,-5) {$v_{n+1}''$};
  \node[clear] (dots2'')     at (2,-5) {$\cdots$}; 
  \node[white] (vn+p'')     at (4,-5) {$v_{n+p}''$};

  % Edges in the Fischer cover
  \draw[to] (v1) to node[auto] {$g_1$} (v2); 
  \draw[to] (v2) to node[auto] {$g_2$} (dots); 
  \draw[to] (dots) to node[auto] {$g_n$} (vn+1);
  \draw[to] (vn+1) to node[auto] {$g_{n+1}$} (dots2);
  \draw[to] (dots2) to node[auto] {$g_{n+p-1}$} (vn+p);  

  \draw[to, loop left] (v1) to node[auto] {$0 ,  \ldots , g_{1}-1$} (v1); 
  \draw[to, bend left = 25] (v2) to node[auto] {\quad \qquad \qquad $0, \ldots , g_{2}-1$} (v1);    
  \draw[to, bend left = 45] (vn+1) to node[auto] {\qquad $0, \ldots , g_{n+1}-1$} (v1);     
  \draw[to, bend left = 60] (vn+p) to node[auto] {$0, \ldots , g_{n+p}-1$} (v1);     
  \draw[to, bend right = 45] (vn+p) to node[auto,swap] {$g_{n+p}$} (vn+1);     

  \draw[to] (vn+1') to node[auto] {$g_{n+1}$} (dots2');
  \draw[to] (dots2') to node[auto] {$g_{n+p-1}$} (vn+p');  
  \draw[to, bend right = 45] (vn+p') to node[auto,swap] {$g_{n+p}$} (vn+1');     
  \draw[to] (vn+1') to node[very near start, left] {$\; 0, \ldots, g_{n+1}-1$} (v1);
  \draw[to] (vn+p') to node[near start, right] {\qquad $0, \ldots, g_{n+p}-1$} (v1);

  \draw[to] (vn+1'') to node[auto] {$g_{n+1}$} (dots2'');
  \draw[to] (dots2'') to node[auto] {$g_{n+p-1}$} (vn+p'');  
  \draw[to, bend left = 45] (vn+p'') to node[auto,swap] {$g_{n+p}$} (vn+1'');     
  \draw[to] (vn+1'') to node[very near start, left] {$\; 0, \ldots, g_{n+1}-1$} (v1);
  \draw[to, bend left = 10] (vn+p'') to node[very near start, right] {$0, \ldots, g_{n+p}-1$} (v1);
  
\end{tikzpicture}
\end{center}
\caption[Fiber product covers of sofic beta-shifts.]{Fiber product cover of a sofic beta-shift with minimal $n,p$ such that $g(\beta) = g_1 \cdots g_n(g_{n+1} \cdots g_{n+p})^\infty$.} 
\label{fig_beta_fiber}
\end{figure}
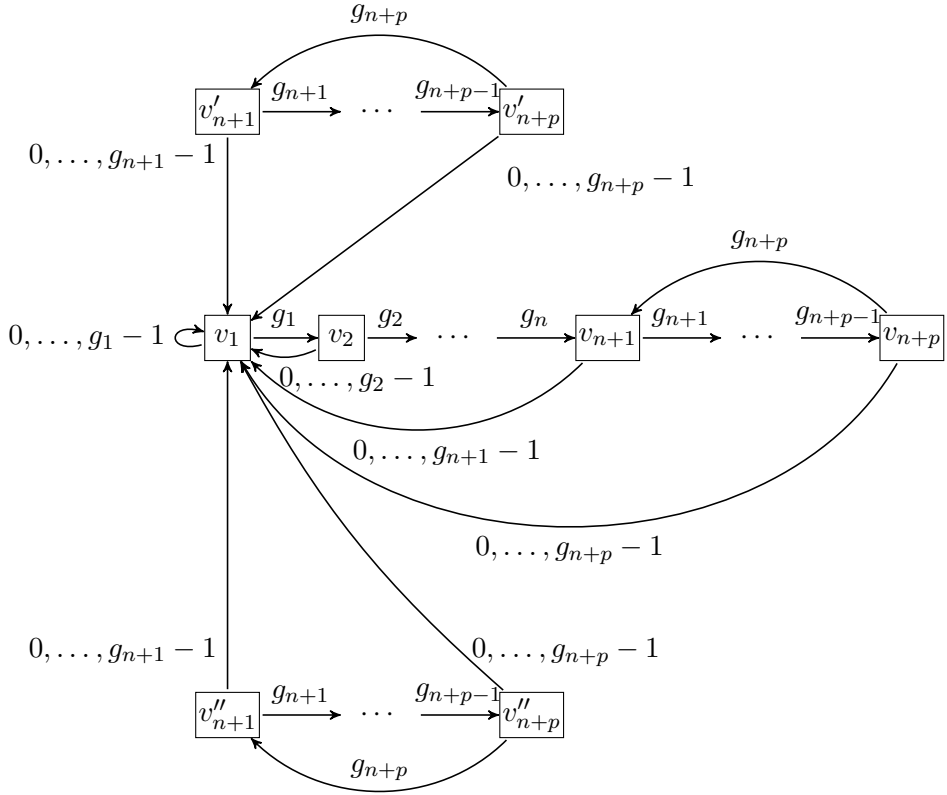

The following proposition shows that the fiber product cover always has the structure seen in the previous example.

\begin{proposition}
\label{prop_beta_fiber}
\index{beta-shift!fiber product cover of}
Let $\beta > 1$ such that $g(\beta)$ is eventually periodic but not periodic, then the fiber product cover of $\X_\beta$ is the graph shown in Figure \ref{fig_beta_fiber}.
\end{proposition}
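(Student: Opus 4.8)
The plan is to compute the fiber product graph directly from the right Fischer cover $(F_\beta, \LL_\beta)$ described in Proposition \ref{prop_beta_sofic}, then pass to its maximal essential subgraph. Recall $F_\beta^0 = \{v_1, \dots, v_{n+p}\}$, and the edges are: loops/return edges labeled $0, \dots, g_i - 1$ from $v_i$ to $v_1$, forward edges labeled $g_i$ from $v_i$ to $v_{i+1}$ for $i < n+p$, and one edge labeled $g_{n+p}$ from $v_{n+p}$ to $v_{n+1}$. So the fiber product graph has vertices $(v_i, v_j)$ and an edge labeled $a$ from $(v_i, v_j)$ to $(v_k, v_\ell)$ iff there is an $a$-edge from $v_i$ to $v_k$ and an $a$-edge from $v_j$ to $v_\ell$ in $F_\beta$. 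First I would sort out which pairs can carry edges: from $v_i$ the only labels available are $0, \dots, g_i$, so from $(v_i, v_j)$ the available labels are $0, \dots, \min(g_i, g_j)$.

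The key observation to organize the casework is that from $v_i$, a symbol $a < g_i$ always leads to $v_1$, while $a = g_i$ leads to the ``successor'' $v_{i+1}$ (or $v_{n+1}$ if $i = n+p$). So from $(v_i, v_j)$ with $i \le j$ (the case $i > j$ being symmetric), a symbol $a$ with $a < \min(g_i, g_j)$ leads to $(v_1, v_1)$; a symbol $a = g_i < g_j$ leads to $(v_{i+1}, v_1)$; and $a = g_i = g_j$ leads to $(\text{succ}(v_i), \text{succ}(v_j))$. The next step is to check, using Theorem \ref{thm_renyi} and Theorem \ref{thm_beta_existence} applied to the generating sequence, exactly when $g_i = g_j$ can hold for $i \ne j$ in such a way that the pair $(v_i, v_j)$ is reachable and co-reachable. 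The Parry condition $g_k g_{k+1} \cdots \le g_1 g_2 \cdots$ for all $k$ (which the generating sequence satisfies) forces: if two distinct positions $i < j$ satisfy $g_i = g_j$ and the suffixes agree going forward long enough to keep a fiber-product path alive, then those suffixes must actually be eventually periodic suffixes of $g(\beta)$, i.e.\ both lie in the periodic tail and are shifts of each other by a multiple of $p$. Since $n, p$ are minimal, the only way to have a nontrivial path staying off the diagonal is to track the periodic block: one copy sitting at $v_{n+i}$ (inside the Fischer cover's tail cycle $v_{n+1} \to \cdots \to v_{n+p} \to v_{n+1}$) and a second ``phantom'' copy which I will label $v_{n+i}'$ or $v_{n+i}''$. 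I would verify that these off-diagonal vertices are precisely $(v_{n+i}, v_{n+j})$ where $j - i \equiv c \pmod p$ for the unique nonzero cyclic shift $c$ making the two cyclic reads of $\per = g_{n+1} \cdots g_{n+p}$ compatible — which, because $\per$ is a nontrivial word with the Parry property, gives exactly one extra ``phase'' above and one below the diagonal, matching the two rows $v'$ and $v''$ in Figure \ref{fig_beta_fiber}.

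Concretely the steps are: (1) list the fiber product graph's edges by the case analysis above; (2) identify the diagonal subgraph with $(F_\beta, \LL_\beta)$, giving the bottom row of Figure \ref{fig_beta_fiber}; (3) show every off-diagonal vertex $(v_i, v_j)$ that is both reachable from and co-reachable to the diagonal must have $i, j > n$ and be of the form described — here one uses that leaving the diagonal requires $g_i = g_j$ with $i \ne j$, that from such a vertex the only escape back toward $v_1$ is via a symbol $< \min(g_i, g_j)$ landing in $(v_{i+1}, v_1)$ or $(v_1, v_{j+1})$ which must then rejoin, and that co-reachability forces the shift to be realized inside the periodic tail; (4) check that for each valid phase there are exactly two off-diagonal sheets (one ``$(v_{n+i}, v_{n+j})$ with $j>i$ phase'' and its transpose), drawn as the $v'$-row and $v''$-row, each an isomorphic copy of the tail cycle $v_{n+1}' \to \cdots \to v_{n+p}' \to v_{n+1}'$ together with the return edges $0, \dots, g_{n+i}-1$ down to $v_1$; (5) confirm essentiality: every vertex in the claimed graph emits and receives an edge, and conversely any vertex of the full fiber product graph not in the claimed graph fails to be essential (e.g.\ $(v_i, v_j)$ with $\min(i,j) \le n$ and $i \ne j$, or with incompatible phase, has no return path to the diagonal). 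The main obstacle will be step (3)–(4): pinning down precisely which phase $c$ occurs and proving it is unique requires careful use of the minimality of $p$ together with the strict Parry inequality $\per < g(\beta)_{[1,p]}$ (which holds because $g(\beta)$ is not periodic), essentially rerunning the cyclic-permutation argument from the proof of Proposition \ref{prop_beta_2_to_1} but now simultaneously for two offset copies of the period; once the bookkeeping for the two sheets and their return edges is set up, the rest is the routine verification that the resulting graph coincides edge-for-edge with Figure \ref{fig_beta_fiber}.
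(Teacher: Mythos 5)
Your overall plan (compute the fiber product graph from the Fischer cover, then cut down to the maximal essential subgraph) is workable, and your case analysis of the edges leaving $(v_i,v_j)$ is correct, but the crucial steps (3)--(4) identify the wrong set of off-diagonal vertices. You claim the surviving off-diagonal vertices are the pairs $(v_{n+i},v_{n+j})$ with \emph{both} coordinates in the periodic tail and $j-i\equiv c \pmod p$. Such pairs can never survive essentialization: the tail $v_{n+1}\to\cdots\to v_{n+p}\to v_{n+1}$ is a single simple cycle of length $p$, so two presentations of one left-ray terminating at $v_{n+i}$ and $v_{n+j}$ with $i\neq j$ would force $\per$ to equal a nontrivial cyclic rotation of itself, contradicting minimality of $p$. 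The actual off-diagonal vertices pair a vertex of the tail cycle with a vertex of the \emph{second} circuit labeled $\per$, the one through $v_1$ which exists precisely because $g(\beta)$ is aperiodic (so $\per < g_1\cdots g_p$); since $v_1$ itself lies on that circuit and $n\geq 1$, at least one coordinate always has index $\leq n$. Concretely, for $g(\beta)=11(10)^\infty$ the off-diagonal vertices are $(v_1,v_3)$, $(v_2,v_4)$ and their transposes, and in the paper's Example \ref{ex_beta_fiber} ($g(\beta)=1(10)^\infty$) they are $(v_1,v_2)$, $(v_2,v_3)$ and transposes --- none of your claimed form. This identification is exactly what the paper's proof extracts, with no casework, from the proof of Proposition \ref{prop_beta_2_to_1}: a left-ray has two presentations iff it equals $w^\infty$ for a cyclic permutation $w$ of $\per$, and those two presentations end at the tail-cycle vertex and at the corresponding vertex of the circuit through $v_1$.

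A secondary problem is your essentiality criterion, ``reachable from and co-reachable to the diagonal.'' The diagonal is forward-invariant (the Fischer cover is right-resolving), so no off-diagonal vertex is ever reachable from the diagonal; and every vertex of the full fiber product graph has a path into the diagonal (follow label-$0$ edges), so ``no return path to the diagonal'' cannot be what kills the discarded vertices. Since every vertex admits an infinite outgoing path, the maximal essential subgraph is determined by which vertices admit an infinite \emph{incoming} path, i.e., which pairs $(u,u')$ receive presentations of arbitrarily long common words; for $u\neq u'$ that is precisely the two-presentation analysis above, and fixing this would also repair step (5).
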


\begin{comment}
\noindent
Note that the fiber product cover is obtained from the multiplicity set cover by adding an extra copy of the second layer and all edges starting there.
\end{comment}

\begin{proof}
Let $(F_\beta, \LL_\beta)$ be the right Fischer cover of $\X_\beta$, and let $n,p$ be minimal such that $g(\beta) = g_1 \cdots g_n (g_{n+1} \cdots g_{n+p})^\infty$. By the proof of Proposition \ref{prop_beta_2_to_1}, a left-ray will have a unique presentation unless it is equal to $w^\infty$ for some cyclic permutation $w$ of the period $g_{n+1} \cdots g_{n+p}$. To find the fiber product cover, it is therefore sufficient to consider such periodic left-rays. 

By the proof of Proposition \ref{prop_beta_2_to_1}, there exist $u_0, \ldots , u_{p-1}, u'_0, \ldots, u'_{p-1} \in F_\beta^0$ with $u_i \neq u_i'$ such that there are edges labeled $g_{n+i+1}$ from $u_i$ to $u_{i+1 \pmod{p}}$ and from $u'_i$ to $u'_{i+1 \pmod{p}}$ for each $0 \leq i \leq p-1$. Now $(u_0,u'_0), \ldots, (u_{p-1},u'_{p-1})$ are the only off-diagonal vertices in the fiber product cover. For each $0 \leq i \leq p-1$, the fiber product cover has an edge labeled $g_{n+i+1}$ from $(u_i,u'_i)$ to $(u_{i+1 \pmod{p}},u'_{i+1 \pmod{p}})$ and edges labeled $0, \ldots, g_{n+i+1}-1$ from $(u_i,u'_i)$ to $(v_1,v_1)$, where $v_1$ is the first vertex of the right Fischer cover. This gives the labeled graph shown in Figure \ref{fig_beta_fiber}.
\end{proof}

\begin{remark}
\label{rem_beta_fiber_action}
Let $\beta > 1$ with $g(\beta) = g_1 \cdots g_n(g_{n+1} \cdots g_{n+p})^\infty$ for minimal $n,p$, and let $(P_\beta, \LL_{P_\beta})$ be the right fiber product cover of $\X_\beta$. Consider the $\Z / 2\Z$ action on $\X_{P_\beta}$ induced by the labeling. The element $1 \in \Z / 2\Z$ acts by fixing the part of the graph that is isomorphic to the Fischer cover and switching the two analogous irreducible components lying above (see Figure \ref{fig_beta_fiber}).
\end{remark}

%##################################################
\section{Invariants and equivalences}
\label{sec_beta_classification}
\label{sec_beta_invariants}
In this section, the acquired knowledge about the structure of covers of sofic beta-shifts will be used to compute flow invariants and to reduce the flow classification problem through a series of concrete constructions. 

%#######
\subsection{Flow classification of beta-shifts of finite type}
The characterisation of beta-shifts of finite type given in Corollary \ref{cor_beta_sft} makes it possible to give a complete flow classification of such shifts.

\begin{proposition}
\label{prop_beta_sft_bf}
Given $\beta > 1$ such that $\X_\beta$ is an SFT, choose minimal $p \in \N$ such that the generating sequence is $g(\beta) = (g_1 \cdots g_p)^\infty$, then $\BF_+(\X_\beta) = - \Z / S\Z$ with $S=\sum_{j=1}^p g_j$. In particular, every SFT beta-shift is flow equivalent to a full shift.
\end{proposition}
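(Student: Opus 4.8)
The plan is to compute $\BF_+$ directly from the right Fischer cover. By Corollary~\ref{cor_beta_sft} the hypothesis that $\X_\beta$ is an SFT forces $g(\beta)$ to be periodic, so a minimal period length $p$ with $g(\beta)=(g_1\cdots g_p)^\infty$ exists, and by Proposition~\ref{prop_beta_2_to_1} the covering map $\pi\colon\X_{F_\beta}\to\X_\beta$ is then $1$ to $1$; being a continuous, shift-commuting bijection of shift spaces it is a conjugacy, so $\BF_+(\X_\beta)=\BF_+(\X_{F_\beta})$ and I may work with the adjacency matrix $A$ of $(F_\beta,\LL_\beta)$. From Proposition~\ref{prop_beta_sofic} (case $n=0$) this $A$ is the $p\times p$ matrix with $A_{i,1}=g_i$ and $A_{i,i+1}=1$ for $1\le i\le p-1$, with $A_{p,1}=g_p+1$, and with all remaining entries $0$ — the vertex $v_p$ sends all $g_p+1$ of its edges back to $v_1$. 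Note also that $g_1=\intpart{\beta}\ge 1$, so $S\ge 1$.

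The main work is the linear algebra. First I would show $\det(\Id-A)=-S$: adding columns $2,\dots,p$ to column $1$ replaces column $1$ by the column with entries $-g_1,\dots,-g_p$ (since each column $j\ge 2$ of $\Id-A$ has only a $1$ in row $j$ and a $-1$ in row $j-1$) and leaves the other columns untouched, after which expansion along the last row — or a short induction on $p$, with base case $\Id-A=(-S)$ when $p=1$ — gives $\det(\Id-A)=-S$. Then I would compute the cokernel $\Z^p/\Z^p(\Id-A)$: deleting row $p$ and column $1$ from $\Id-A$ leaves a matrix that is lower triangular with $\pm1$ on the diagonal, so some $(p-1)\times(p-1)$ minor is a unit; hence the first $p-1$ invariant factors in the Smith normal form are $1$ and the last is $\lvert\det(\Id-A)\rvert=S$, so $\BF(\X_\beta)\cong\Z/S\Z$. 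Combining with $\sgn\det(\Id-A)=\sgn(-S)=-1$ (valid since $S\ge 1$) yields $\BF_+(\X_\beta)=-\Z/S\Z$.

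For the final sentence, the full shift on $S+1$ letters has $1\times1$ adjacency matrix $(S+1)$, so for it $\Id-A=(-S)$ and its invariant is again $-\Z/S\Z$. Both $\X_\beta$ and this full shift are irreducible SFTs with more than one point (for $\beta>1$, $\X_\beta$ contains $0^\infty$ and $g(\beta)\neq 0^\infty$), hence neither is flow equivalent to the trivial shift, so Franks' theorem (as recalled in Section~\ref{sec_introduction}) shows that $\X_\beta$ is flow equivalent to the full shift on $S+1$ symbols. I do not expect a serious obstacle here; the points needing care are the bookkeeping around the extra $+1$ in the entry $A_{p,1}$, the degenerate case $p=1$, and the routine justification that $\BF_+(\X_\beta)$ may be read off the Fischer cover via the $1$-to-$1$ covering map.
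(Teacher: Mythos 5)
Your proposal is correct and follows the same route as the paper: read off the adjacency matrix of the right Fischer cover from Proposition \ref{prop_beta_sofic} (with the $g_p+1$ entry from the extra return edge), then compute the Smith normal form and determinant of $\Id - A$, and conclude flow equivalence to the full $(S+1)$-shift via Franks' theorem. The paper simply declares the linear algebra ``straightforward,'' so your column operations, the unit $(p-1)\times(p-1)$ minor, and the checks that $S\geq 1$ and that the $1$-to-$1$ covering map is a conjugacy are exactly the details being left to the reader.
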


\noindent
Note that $\sum_{j=1}^p g_j = \sum_{i=1}^p a_i - 1$ when $e(\beta) = a_1 \cdots a_p00\cdots$ and $a_p \neq 0$.

\begin{proof}
By Proposition \ref{prop_beta_sofic}, the (non-symbolic) adjacency matrix of the underlying graph of the right Fischer cover of $\X_\beta$ is
\begin{displaymath}
A = \begin{pmatrix}
g_1      & 1 & 0 & \cdots & 0 & 0 \\ 
g_2      & 0 & 1 &             & 0 & 0 \\ 
g_3      & 0 & 0 &             & 0 & 0 \\ 
\vdots   &    &    & \ddots &    &    \\ 
g_{p-1 }& 0 & 0 &          & 0 & 1 \\ 
g_p+1      & 0 & 0 & \cdots & 0 & 0  
\end{pmatrix}.
\end{displaymath} 
Now it is straightforward to compute the complete invariant by finding the Smith normal form and determinant of $\Id - A$.
\end{proof}

\noindent
It is also not hard to construct a concrete flow equivalence between the beta-shift considered in Proposition \ref{prop_beta_sft_bf} and the full $(S+1)$-shift.

\begin{example}
If $\beta > 1$, then the entropy of $\X_\beta$ is $\log \beta$ \cite{parry,renyi}.
In particular, beta-shifts $\X_{\beta_1}$ and $\X_{\beta_2}$ with $\beta_1 \neq \beta_2$ are never conjugate.
However, by Theorem \ref{thm_beta_existence}, there exist $1 < \beta _1 < 2$ and $2 < \beta_2 < 3$ such that $(110)^\infty$ is the generating sequence of $\X_{\beta_1}$ and $(20)^\infty$ is the generating sequence of $\X_{\beta_2}$, and the beta-shifts $\X_{\beta_1}$ and $\X_{\beta_2}$ are flow equivalent by Proposition \ref{prop_beta_sft_bf}. 
\end{example}

%#######
\subsection{Bowen-Franks groups}
The Bowen-Franks groups of the underlying graphs of the covers from Section \ref{sec_beta_covers} will be computed in this section. 

\begin{proposition} 
\label{prop_beta_bf}
Let $\beta > 1$ with sofic $\X_\beta$, and let $n,p$ be minimal such that $g(\beta) = g_1 \cdots g_n ( g_{n+1} \cdots g_{n+p})^\infty$. Let $A_F$
and $A_P$ be the adjacency matrices of the underlying graphs of respectively the Fischer cover
and the fiber product cover. Then $\BF_+(A_F) = -\Z / S\Z$
and $\BF(A_P) = (\Z / S\Z) \oplus \Z \oplus \Z$ where $S = \sum_{i=1}^p g_{n+i}$.
\end{proposition}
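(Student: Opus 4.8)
The plan is to write down the adjacency matrices $A_F$ and $A_P$ explicitly from the descriptions in Proposition~\ref{prop_beta_sofic} and Proposition~\ref{prop_beta_fiber}, and then compute the cokernels $\Z^N/\Z^N(\Id - A)$ by putting $\Id - A$ into Smith normal form. For the Fischer cover, $F_\beta^0 = \{v_1, \ldots, v_{n+p}\}$, and the adjacency matrix has the ``companion-like'' structure already visible in the proof of Proposition~\ref{prop_beta_sft_bf}: the first column records the number $g_i$ of return edges to $v_1$ (with an extra $+1$ in row $n+p$ coming from the edge $v_{n+p} \to v_{n+1}$), there is a superdiagonal of $1$'s from the edges $v_i \to v_{i+1}$, and one additional $1$ in position $(n+p,\,n+1)$. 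First I would perform column reduction on $\Id - A_F$: using the superdiagonal $-1$'s one can clear everything in the corresponding columns by row operations, collapsing the matrix down to a $1\times 1$ block whose entry is, up to sign, $1 - \sum_{i=1}^{n+p} g_i + (\text{correction})$. The eventual-periodicity bookkeeping needs care here — the telescoping must produce exactly $S = \sum_{i=1}^{p} g_{n+i}$ rather than $\sum_{i=1}^{n+p} g_i$, and this is where the minimality of $n,p$ and the precise placement of the edge to $v_{n+1}$ (not $v_1$) enter. The sign of $\det(\Id - A_F)$ should come out negative, giving $\BF_+(A_F) = -\Z/S\Z$, matching Proposition~\ref{prop_beta_sft_bf} in the periodic case.

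For the fiber product cover $P_\beta$, Figure~\ref{fig_beta_fiber} shows the graph is the Fischer cover together with two extra isomorphic ``tails'' $v_{n+1}', \ldots, v_{n+p}'$ and $v_{n+1}'', \ldots, v_{n+p}''$, each forming a $p$-cycle that also emits return edges to $v_1$. So $A_P$ is block lower-triangular: a diagonal block equal to $A_F$ (on $v_1, \ldots, v_{n+p}$), plus two diagonal blocks, each a $p\times p$ matrix $C$ describing the cyclic tail, and off-diagonal blocks recording the edges from the tails down into $v_1$. Since $\Id - A_P$ is block triangular, $\det(\Id - A_P) = \det(\Id - A_F)\cdot\det(\Id - C)^2$ and, more importantly for the group, one first reduces the triangular structure: row operations using the Fischer block clear the off-diagonal blocks, so $\BF(A_P) = \BF(A_F) \oplus \operatorname{cok}(\Id - C)^{\oplus 2}$. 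The $p\times p$ matrix $C$ is the adjacency matrix of a single $p$-cycle with the $g_{n+i}-1$ return edges to $v_1$ deleted (those became the off-diagonal entries), i.e.\ a pure cyclic permutation matrix, so $\Id - C$ has cokernel $\Z$: indeed $\det(\Id - C) = \pm(1 - 1) = 0$ for a cyclic permutation, and the Smith normal form of $\Id - C$ is $\operatorname{diag}(1,\ldots,1,0)$, giving $\operatorname{cok}(\Id - C) \cong \Z$. Hence $\BF(A_P) = (\Z/S\Z)\oplus\Z\oplus\Z$, as claimed.

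I would present this by first recording the two matrices, then carrying out the row/column reduction for $A_F$ to get the Smith form $\operatorname{diag}(1,\ldots,1,S)$ together with the determinant sign, and finally invoking block-triangularity plus the trivial computation for the cyclic block $C$ to assemble $\BF(A_P)$. The main obstacle I anticipate is purely bookkeeping: getting the telescoping sums in the Smith-form reduction of $\Id - A_F$ to land on exactly $S = \sum_{i=1}^p g_{n+i}$ — the ``non-periodic part'' $g_1 \cdots g_n$ must cancel out completely — and correctly tracking the determinant sign through the elementary operations. Everything else is a routine consequence of the block-triangular shape of $A_P$ and the observation that the two extra components are cyclic loops, each contributing a free $\Z$ to the Bowen--Franks group.
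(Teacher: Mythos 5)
Your overall strategy --- writing out $A_F$ and $A_P$ explicitly and reducing $\Id-A$ to Smith normal form --- is the same as the paper's, and your treatment of the Fischer cover is sound: the relations from the rows of $\Id-A_F$ read $e_{i+1}\equiv e_i-g_ie_1$, hence $e_{i+1}\equiv\bigl(1-\sum_{j\le i}g_j\bigr)e_1$, and the final row (with $+1$ in column $n+p$, $-1$ in column $n+1$, and $-g_{n+p}$ in column $1$) then collapses to exactly $Se_1\equiv 0$ with $S=\sum_{i=1}^{p}g_{n+i}$; the preperiodic part cancels as you anticipated, and $\det(\Id-A_F)=-S$.

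The gap is in the fiber-product step, where you assert that ``row operations using the Fischer block clear the off-diagonal blocks, so $\BF(A_P)=\BF(A_F)\oplus\operatorname{cok}(\Id-C)^{\oplus 2}$.'' That clearing is not possible in general: the $i$-th row of an off-diagonal block is $(-g_{n+i},0,\dots,0)=-g_{n+i}e_1$, and this lies in the row lattice of $\Id-A_F$ only when $S\mid g_{n+i}$, since that lattice has quotient $\Z/S\Z$ generated by the class of $e_1$. For example, for $g(\beta)=2(21)^\infty$ one has $S=3$ and off-diagonal rows $(-2,0,0)$ and $(-1,0,0)$, neither of which is an integer combination of the rows of $\Id-A_F$. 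Nor can you fall back on a general principle: the cokernel of a block-triangular integer matrix is not the direct sum of the cokernels of its diagonal blocks (the matrix $\bigl(\begin{smallmatrix}2&0\\1&2\end{smallmatrix}\bigr)$ has cokernel $\Z/4\Z$, not $\Z/2\Z\oplus\Z/2\Z$). The stated conclusion is nevertheless correct, and the repair is short: the rows of the primed block give the relations $e'_i-e'_{i+1}\equiv g_{n+i}e_1$ (indices cyclic mod $p$), which express $e'_2,\dots,e'_p$ in terms of $e'_1$ and $e_1$, and whose sum around the cycle is $Se_1\equiv 0$ --- a relation already imposed by the Fischer block. So each of the two extra components contributes exactly one new free generator and no new torsion, giving $(\Z/S\Z)\oplus\Z\oplus\Z$. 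You should replace the block-splitting claim by this computation (or by an explicit Smith reduction of the full $(n+3p)\times(n+3p)$ matrix).
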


\begin{proof}
By Proposition \ref{prop_beta_sofic},
\begin{displaymath}
A_F = 
\left(\!\! \begin{array}{ c c c c c c | c c c c c c  }
g_1    & 1 & 0 & \cdots & 0 & 0 &
0        & 0 & 0 & \cdots & 0 & 0  \\
g_2    & 0 & 1 &            & 0 & 0 &
0        & 0 & 0 &            & 0 & 0   \\
g_3    & 0 & 0 &            & 0 & 0 &
0        & 0 & 0 &           & 0 & 0    \\
\vdots &   &    & \ddots &    & \vdots &
\vdots &   &    & \ddots &    & \vdots  \\
g_{n-1}    & 0 & 0 &           & 0 & 1 &
0             & 0 & 0 &            & 0 & 0  \\
g_n         & 0 & 0 & \cdots & 0 & 0 &
1             & 0 & 0 & \cdots & 0 & 0  \\
\hline
g_{n+1} &    &    &            &    &    &
0           & 1 &  0 & \cdots & 0 & 0  \\
g_{n+2} &    &    &            &   &    &
0           & 0 & 1 &            & 0 & 0   \\
g_{n+3} &    &    &            &    &   &
0           & 0 & 0 &           & 0 & 0   \\
\vdots   &   &    & 0         &    &    &
\vdots   &   &    & \ddots &    & \vdots  \\
g_{n+p-1}    &  &  &           &    &   &
0             & 0 & 0 &            & 0 & 1   \\
g_{n+p}  &    &    &            &    &    &
1             & 0 & 0 & \cdots & 0 & 0   
\end{array}\!\! \right).
\end{displaymath}
It is straightforward to find the invariant by computing the Smith normal form and determinant of $\Id - A_F$. The other Bowen-Franks group is computed in the same manner.
\end{proof}

\noindent
Note that these Bowen-Franks groups only contain information about the sum of the numbers in the periodic part of the generating sequence. This is partially explained by the results of the following section. 

\subsection{Concrete constructions}
\label{sec_beta_constructions}
This section contains recipes for concrete constructions of flow equivalences reducing the complexity of beta-shifts. Let $n \in \N$ and $n-1 < \beta < n$ be given, and let $X = \X_\beta$. Define $\varphi \colon \AA_\beta \to \{ 0 ,1\}^*$ by 
\begin{displaymath}
   \varphi(j) = 
   1^j 0 
\end{displaymath}
and extend this to $\varphi \colon \AA_\beta^* \to \{ 0 ,1\}^*$ by $\varphi(a_1 \cdots a_k) = \varphi(a_1) \cdots \varphi(a_k)$.
It is straightforward to check that the shift closure of $\varphi(\BB(X))$ is the language of a shift space $X'$ flow equivalent to $X$ (see \cite[Section 2.3]{johansen_thesis} for a collection of similar constructions). Define the induced map $\varphi \colon X^+ \to X'^+$ by $\varphi(x_1 x_2 \cdots ) = \varphi(x_1) \varphi(x_2) \cdots $.

\begin{lemma}
\label{lem_beta_order_preserving}
The map $\varphi \colon X^+ \to X'^+$ constructed above is surjective and order preserving.
\end{lemma}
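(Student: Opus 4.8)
The plan is to understand the map $\varphi$ concretely and then verify surjectivity and order preservation separately. Recall $\varphi(j) = 1^j 0$, so $\varphi$ replaces each symbol $j \in \AA_\beta$ by a block consisting of $j$ ones followed by a single zero; the key structural fact is that the blocks $\varphi(j)$ for $j = 0, 1, \ldots, \intpart\beta$ are exactly the words of the form $1^k 0$ with $0 \le k \le \intpart\beta$, and crucially every such block ends in a $0$ while containing no run of more than $\intpart\beta$ consecutive $1$'s. A right-ray $y^+ \in X'^+$ is obtained as a concatenation of such blocks shifted to an arbitrary position, so the first thing I would pin down is a \emph{parsing lemma}: any $y^+ \in X'^+$ that begins at a block boundary (i.e.\ is of the form $\varphi(x_1)\varphi(x_2)\cdots$) admits a \emph{unique} factorization into blocks $1^{k_i}0$, because each block is determined by reading up to and including the next $0$. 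This parsing is what lets one define an inverse-type map and compare orders block by block.

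For surjectivity, I would argue that given $y^+ \in X'^+$, after deleting an initial segment (which is harmless for surjectivity onto $X'^+$ since the construction of $X'$ via the shift closure means $X'^+$ is shift invariant) one may assume $y^+$ starts at a block boundary, parse it uniquely as $1^{k_1}0\,1^{k_2}0\cdots$, and check that the resulting sequence $x^+ = k_1 k_2 \cdots$ actually lies in $X^+$. The point here is to use R\'enyi's characterization (Theorem~\ref{thm_renyi}): $x^+ \in \X_\beta^+$ iff every shifted tail $x_k x_{k+1}\cdots \le g(\beta)$. One must translate a lexicographic inequality between tails of $y^+$ (which holds because $y^+ \in X'^+$, again by R\'enyi applied to $X$ pulled through $\varphi$, or more simply because $y^+$ lies in the language $X'^+$ which by definition only contains concatenations of images of admissible words) into the corresponding inequality between tails of $x^+$. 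I expect this is where one needs a short argument that $\varphi$ is injective on right-rays and that $\varphi$ together with its block-parsing inverse respects the admissibility conditions.

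For order preservation, I would show directly that if $x^+ < z^+$ in $X^+$ then $\varphi(x^+) \le \varphi(z^+)$ (with equality only if $x^+ = z^+$, giving "order preserving" in the weak sense stated). Let $m$ be the first index where $x_m \ne z_m$, say $x_m < z_m$. The blocks agree up to position $m-1$, i.e.\ $\varphi(x_1)\cdots\varphi(x_{m-1}) = \varphi(z_1)\cdots\varphi(z_{m-1})$, and then we compare $\varphi(x_m)\varphi(x_{m+1})\cdots$ with $\varphi(z_m)\varphi(z_{m+1})\cdots$. Since $x_m < z_m$ we have $\varphi(x_m) = 1^{x_m}0$ and $\varphi(z_m) = 1^{z_m}0$ with $x_m < z_m$; the first $x_m$ symbols agree (all $1$'s), and at position $x_m+1$ the $x$-side has a $0$ while the $z$-side has a $1$ (since $z_m \ge x_m + 1$). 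Hence $\varphi(x^+) < \varphi(z^+)$, which suffices. The main obstacle, as I see it, is not order preservation — which is essentially the observation above that $1^a0$ is lexicographically below $1^b0$ precisely when $a < b$, and that prefixes carry over — but rather making the surjectivity argument airtight: one must be careful that an arbitrary element of $X'^+$ can be shifted to a block boundary and that the parsed preimage genuinely satisfies R\'enyi's tail condition, which requires comparing the tail inequalities on the coded side with those on the base side and checking no spurious long run of $1$'s or misaligned block can occur. I would handle this by first establishing the unique-parsing lemma and an explicit description of $\BB(X')$ as "subwords of concatenations $\varphi(w)$, $w \in \BB(X)$", and then the translation of inequalities becomes a finite, if slightly fiddly, case check.
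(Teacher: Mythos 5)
Your treatment of order preservation is fine and matches the paper's (the paper dismisses it with ``by construction''; your explicit comparison at the first index of disagreement, using that $1^a0 < 1^b0$ exactly when $a<b$, is the content of that remark). The gap is in surjectivity. You reduce to the case where $y^+$ starts at a block boundary by ``deleting an initial segment,'' justified by shift invariance of $X'^+$. That justification does not work: shift invariance tells you the \emph{shifted} ray lies in $X'^+$ and hence (by your argument) has a preimage, but surjectivity requires a preimage of $y^+$ itself, including rays that begin in the middle of a block. You would still have to re-attach the deleted partial block and explain why the result is in the image of $\varphi$, and that is precisely the point your write-up skips.

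The observation that closes this gap --- and which is essentially the entire content of the paper's proof --- is that a (nonempty) suffix of $\varphi(x_1)=1^{x_1}0$ is again a block $1^a0=\varphi(a)$ for some $0\le a\le x_1$. By the construction of $X'$ as the shift closure of $\varphi(\BB(X))$, an arbitrary $x'^+\in X'^+$ already has the form $w\,\varphi(x_2)\varphi(x_3)\cdots$ with $x^+=x_1x_2\cdots\in X^+$ and $w$ a suffix of $\varphi(x_1)$; writing $w=\varphi(a)$ with $a\le x_1$ gives $x'^+=\varphi(a x_2x_3\cdots)$, and $ax_2x_3\cdots\le x_1x_2\cdots\le g(\beta)$ together with Theorem~\ref{thm_renyi} shows $ax_2x_3\cdots\in X^+$. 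This also spares you the ``fiddly'' step you anticipate: there is no need to parse an arbitrary ray and verify R\'enyi's tail condition from scratch, since admissibility of the tail $x_2x_3\cdots$ comes for free from the definition of $X'$, and only the first symbol needs to be adjusted downward, which can only preserve the inequalities.
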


\begin{proof}
The map preserves the lexicographic order by construction.
Let $x'^+ \in X'^+$ be given. By construction, there exists $x^+ = x_1x_2 \cdots \in X^+$ such that $x'^+ = w \varphi(x_2) \varphi(x_3) \cdots$ where $w$ is a suffix of $\varphi(x_1)$. Now there exists $1 \leq a \leq x_1$ such that $w = \varphi(a)$. Since $a x_2 x_3 \cdots \leq  x_1 x_2 \cdots \leq g(\beta)$, it follows from Theorem \ref{thm_renyi} that $a x_2 x_3 \cdots \in X^+$, so $x'^+ \in \varphi(X^+)$.  
\end{proof}

\begin{theorem}\index{beta-shift!flow equivalence of}
For every $\beta > 1$ there exists $1 < \beta' < 2$ such that $\X_\beta \FE \X_{\beta '}$. 
\end{theorem}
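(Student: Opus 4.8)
The plan is to use the substitution map $\varphi \colon j \mapsto 1^j 0$ from the preceding paragraph, which realises a flow equivalence $\X_\beta \FE X'$ for a shift space $X'$ over the alphabet $\{0,1\}$, and then to show that $X'$ is again a beta-shift, namely $X' = \X_{\beta'}$ for some $1 < \beta' \le 2$. Since by Lemma~\ref{lem_beta_order_preserving} the induced map $\varphi \colon X^+ \to X'^+$ is surjective and order preserving, the natural candidate for the generating sequence of $\beta'$ is $\varphi(g(\beta))$, or rather an appropriate ``tail adjustment'' of it; the main work is to verify that this candidate satisfies the Parry admissibility condition of Theorem~\ref{thm_beta_existence} and that $X'$ coincides with the corresponding beta-shift via Theorem~\ref{thm_renyi}.

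First I would describe $X'$ concretely. By construction $\BB(X')$ is the shift closure of $\varphi(\BB(X))$; a biinfinite $\{0,1\}$-sequence lies in $X'$ precisely when every finite factor occurs inside some $\varphi(w)$ with $w \in \BB(X)$. Using Theorem~\ref{thm_renyi}, $w = w_1 \cdots w_m \in \BB(X)$ iff each suffix $w_k \cdots w_m$ extends to a right-ray bounded above by $g(\beta)$, equivalently iff $w_k \cdots w_m \subword g(\beta)$ in the appropriate prefix-comparison sense. I would translate this into a lexicographic constraint on the $\{0,1\}$-side: a right-ray $y^+ \in \{0,1\}^\N$ belongs to $X'^+$ iff $y^+$ avoids long blocks of $1$'s in the way dictated by the digits of $g(\beta)$, and this should be expressible as $\sigma^k(y^+) \le s$ for all $k$, where $s$ is the sequence obtained by applying $\varphi$ to $g(\beta)$ and then, if $g(\beta)$ is finite of the form $a_1 \cdots a_t 0 0 \cdots$ (it is not, by definition of $g$, so in fact $g(\beta)$ never terminates) — in all cases $s = \varphi(g(\beta)) = 1^{g_1}0\,1^{g_2}0\,1^{g_3}0\cdots$. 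One then checks $\sigma^k(s) < s$ for all $k \ge 1$: a shift of $s$ either starts in the middle of a block $1^{g_i}0$, giving a shorter initial run of $1$'s than $s$ has (since $g_1 = a_1 - 1 \ge 1$ unless $\beta$ is an integer, in which case $g_1 \ge 1$ too, and $a_1 = \lceil \beta \rceil - 1 \ge 1$), hence strictly smaller; or it starts at a block boundary $0\,1^{g_{i+1}}\cdots$, which begins with $0 < 1 = s_1$. So $s$ is Parry-admissible, and by Theorem~\ref{thm_beta_existence} there is a unique $\beta'$ with $e(\beta') = s$ when $s$ is infinite — which it is. Since $s$ begins with a single $1$ followed by a $0$ (as $g_1 \ge 1$, the first block is $1^{g_1}0$; if $g_1 = 1$ this is $10$, if $g_1 \ge 2$ the sequence starts $11\cdots$), I need $\beta' < 2$: this holds iff $e(\beta')$ does not begin with $11$, which forces a small case distinction — when $g_1 \ge 2$ the raw image $s$ starts with $11$, corresponding to $\beta' \ge 2$, so there I would instead apply $\varphi$ after first flow-reducing $\beta$ so that its generating sequence starts with a $1$; alternatively note $g_1 \le \intpart\beta$ can be arbitrarily large, so the cleanest fix is: the theorem only claims \emph{some} $\beta' < 2$, so I iterate or choose the reduction to guarantee $g_1(\beta') = 1$, e.g. by observing $\varphi(g(\beta))$ always has \emph{bounded} digit structure and a further application of a suitable symbol expansion/contraction pushes the leading run down to length $1$ while staying flow equivalent.

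The remaining step is to confirm $X' = \X_{\beta'}$, i.e. that the shift space cut out by $s$-admissibility in Theorem~\ref{thm_renyi} is exactly the shift closure of $\varphi(\BB(\X_\beta))$. One inclusion: $\varphi$ order preserving (Lemma~\ref{lem_beta_order_preserving}) gives $\varphi(x^+) \le \varphi(g(\beta)) = s$ for $x^+ \le g(\beta)$, and since every suffix of $x^+$ is again $\le g(\beta)$, applying $\varphi$ and shifting yields $\sigma^k(\varphi(x^+)) \le s$ for all $k$, so $\varphi(\X_\beta^+) \subseteq \X_{\beta'}^+$, hence $X' \subseteq \X_{\beta'}$. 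Conversely, surjectivity of $\varphi \colon \X_\beta^+ \to X'^+$ from Lemma~\ref{lem_beta_order_preserving} together with the observation that $\varphi$ has a well-defined partial inverse on admissible $\{0,1\}$-rays (parse greedily into blocks $1^j 0$) shows $\X_{\beta'}^+ \subseteq X'^+$; combined with shift-invariance and closedness this gives equality.

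\medskip

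The hard part will be the bookkeeping around the leading digit and the finite/eventually-periodic tail of $g(\beta)$: making sure that $\varphi(g(\beta))$, possibly after a minor normalisation, is genuinely the generating sequence (not merely an admissible sequence above the language) of a beta-shift with $\beta' < 2$, and handling the boundary case where $g(\beta)$ begins with a digit $\ge 2$ so that the naive image would give $\beta' \ge 2$. I expect this to be dispatched by an explicit auxiliary flow equivalence — a composition of symbol expansions as catalogued in \cite[Section 2.3]{johansen_thesis} — that replaces a leading run of $1$'s by a single $1$ without leaving the class of beta-shifts, at which point the substitution $\varphi$ lands inside $1 < \beta' < 2$ as required.
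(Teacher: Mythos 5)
Your overall strategy is the paper's: apply $\varphi(j)=1^j0$, invoke Lemma~\ref{lem_beta_order_preserving}, set $g(\beta')=\varphi(g(\beta))$ via Theorem~\ref{thm_beta_existence}, and identify $X'$ with $\X_{\beta'}$ through Theorem~\ref{thm_renyi} using order preservation for one inclusion and surjectivity (greedy parsing into blocks $1^j0$) for the other. That core is sound. But there is one genuinely wrong step: your criterion for $\beta'<2$. You assert that $\beta'<2$ holds ``iff $e(\beta')$ does not begin with $11$,'' conclude that the case $g_1\ge 2$ (where $s=\varphi(g(\beta))$ starts with $11$) would force $\beta'\ge 2$, and then spend the final third of the argument on an unjustified auxiliary reduction to dodge this. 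The criterion is false: the first digit of $e(\beta')$ is $x_1(1)=\intpart{\beta'}$, so $\beta'\ge 2$ forces the first digit to be at least $2$. Any Parry-admissible sequence over $\{0,1\}$ beginning with $1$ — in particular $\varphi(g(\beta))=1^{g_1}0\,1^{g_2}0\cdots$, since $g_1\ge 1$ — automatically yields $1<\beta'<2$. (The paper's own example $\beta=(1+\sqrt5)/2$ with $e(\beta)=11000\cdots$ already contradicts your criterion.) So no case distinction, iteration, or ``leading-run compression'' is needed; and as proposed, that extra step is not justified anyway — replacing a leading run of $1$'s by a single $1$ changes the shift space and is not shown to be a flow equivalence.

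A smaller inaccuracy: your check that $\sigma^k(s)<s$ strictly for all $k\ge 1$ fails when $g(\beta)$ is periodic (the SFT case), since then $s=\varphi(g(\beta))$ is periodic and shifts by the period give equality. In that case $s$ is not $e(\beta')$ for any $\beta'$; it is the \emph{generating sequence} $g(\beta')$ of the $\beta'$ whose expansion of $1$ is the corresponding finite word with the last digit incremented. This is exactly why the statement to aim for is $g(\beta')=\varphi(g(\beta))$ rather than $e(\beta')=\varphi(g(\beta))$ — which is also all that Theorem~\ref{thm_renyi} requires for the identification $X'=\X_{\beta'}$. With these two points repaired, your argument collapses to the paper's proof.
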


\begin{proof}
Let $X = \X_\beta$, and construct $X' \FE X$ and $\varphi \colon X^+ \to X'^+$ as above. Use Theorem \ref{thm_beta_existence} to choose $1 < \beta' < 2$ such that $g(\beta') = \varphi(g(\beta))$. The aim is to prove that $X' = \X_{\beta'}$.
  
Given $x'^+ = x'_1 x'_2 \cdots \in X'^+$ and $k \in \N$, let $x'_{k^+} = x'_k x'_{k+1} \cdots$. Use Lemma \ref{lem_beta_order_preserving} to choose $x^+ \in X^+$ such that $\varphi(x^+) = x'_{k^+}$. Now $x^+ \leq g(\beta)$ and $\varphi$ is order preserving, so $x'_{k^+} \leq g(\beta')$. By Theorem \ref{thm_renyi}, this means that $x'^+ \in \X_{\beta'}^+$. 

Let $x'^+ = x'_1 x'_2 \cdots \in \X_{\beta'}^+$ and let $n = \max \AA_\beta$. Consider the extension $\varphi \colon \{0, \ldots, n\}^\N \to \{0, 1\}^\N$ and note that $x'^+$ does not contain $1^{n+1}$ as a factor, so there exists $x^+ = x_1 x_2 \cdots \in \{0, \ldots, n\}^\N$ such that $\varphi(x^+) = x'^+$.
%The goal is to prove that $x^+ \in \X_\beta$, so $x'^+ \in \varphi(\X_\beta^+) = \X'^+$.
Let $k \in \N$ be given and let $x_{k^+} = x_k x_{k+1} \cdots$. Then there exists $l \geq k$ such that 
$\varphi(x_{k^+}) =  x'_l x'_{l+1} \cdots \leq g(\beta') = \varphi(g(\beta))$, but $\varphi$ is order preserving, so this means that $x_{k^+} \leq g(\beta)$. Hence, $x^+ \in \X_\beta^+$ and $x'^+ \in X'^+$ as desired.
\end{proof}

\noindent
This shows that it is sufficient to consider $1 < \beta < 2$ when trying to classify sofic beta-shifts up to flow equivalence. The next goal is to find a standard form that any sofic beta-shift can be reduced to.

\begin{lemma}
\label{lem_beta_delete_0_in_beginning}
Let $1 < \beta < 2$ such that $g(\beta)$ is aperiodic and let $n$ be the largest number such that $1^n$ is a prefix of $g(\beta)$. Then $\X_\beta \FE 
%\X_\beta^{1^n0 \mapsto 1^n} = 
\X_{\beta'}$ where $g(\beta')$ is obtained from $g(\beta)$ by deleting a $0$ immediately after each occurrence of $1^n$.
\end{lemma}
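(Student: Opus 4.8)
The plan is to exhibit a symbol-expansion/conjugacy chain realizing the flow equivalence, mirroring the construction of $\varphi$ used above but now going in the opposite direction: instead of expanding symbols, we contract the block $10$ (when it occurs right after $1^n$) to a single symbol. Concretely, write $g(\beta) = g_1 g_2 \cdots$ and let $n$ be maximal with $g_1 \cdots g_n = 1^n$. Since $1 < \beta < 2$ and $g(\beta)$ is aperiodic, every occurrence of $1^n$ in a right-ray $x^+ \in \X_\beta^+$ must be followed by a $0$ (otherwise $1^{n+1}$ occurs, contradicting $x_{k^+} \le g(\beta)$ by Theorem \ref{thm_renyi}, since $g(\beta)$ has no prefix $1^{n+1}$). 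So the block $1^n 0$ is forced, and we may legitimately treat $1^n0$ as a unit. Define $\beta'$ via Theorem \ref{thm_beta_existence} by declaring $g(\beta')$ to be the sequence obtained from $g(\beta)$ by deleting the $0$ immediately following each maximal run $1^n$; one must first check that this sequence satisfies Parry's admissibility condition $a_k a_{k+1} \cdots < a_1 a_2 \cdots$, so that such a $\beta'$ exists — this is a routine but slightly fiddly verification comparing shifted tails before and after deletion, using that deletions happen only in the controlled positions after $1^n$.

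Next I would set up the flow equivalence itself. The cleanest route is to note that flow equivalence is generated by conjugacy and symbol expansion, and that $\X_{\beta'}$ is (conjugate to) a symbol expansion of $\X_\beta$ localized at the relevant positions: introduce a recoding of $\X_\beta$ in which, whenever the block $1^n$ is followed by the (forced) $0$, the pair is recorded as a new symbol, say $\natural$; because the $0$ is forced, this recoding is a conjugacy onto a shift over the enlarged alphabet. Then expanding $\natural$ back to the two-symbol block $1^n0$ versus to the one-symbol block $1^n$ differ by a single symbol-expansion move at the flagged sites, and the lemmas collected in \cite[Section 2.3]{johansen_thesis} (the same machinery invoked for $\varphi$ above) give that the two resulting shift spaces are flow equivalent. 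The one thing requiring care is that these moves must be applied \emph{uniformly} at all occurrences and must be compatible with biinfinite sequences — in particular with the two-sided periodic point $\bar{p}^\infty$ over the period — so that no ambiguity is introduced; here the aperiodicity hypothesis and the analysis in the proof of Proposition \ref{prop_beta_2_to_1} guarantee the relevant left-rays are well-behaved.

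Finally I would verify that the shift space produced by this flow equivalence is exactly $\X_{\beta'}$ and not merely flow equivalent to it, by the same two-inclusion argument used in the proof of the preceding theorem. For one inclusion, take $x'^+ \in \X_{\beta'}^+$, lift it through the (inverse of the) contraction to a candidate right-ray over $\AA_\beta$ by re-inserting a $0$ after each $1^n$, and use Theorem \ref{thm_renyi} together with the order-preservation of the contraction/expansion on right-rays to check every tail is $\le g(\beta)$. For the reverse inclusion, start from $x^+ \in \X_\beta^+$, observe every $1^n$ is followed by a forced $0$, contract, and again compare tails against $g(\beta') = $ the contracted $g(\beta)$. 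The main obstacle I anticipate is not any single step but the bookkeeping around \emph{maximal} runs $1^n$: one must ensure the contraction is defined consistently (e.g. a run $1^{2n}0$ is parsed unambiguously) and that "delete a $0$ immediately after each occurrence of $1^n$" in the statement is interpreted as "after each maximal run," so that the map is well-defined and order-preserving; getting the admissibility check for $g(\beta')$ to go through cleanly depends on pinning down exactly this parsing convention.
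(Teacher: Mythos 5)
Your proposal follows essentially the same route as the paper's proof: observe that $1^{n+1}\notin\BB(\X_\beta)$ forces a $0$ after every occurrence of $1^n$, perform the symbol contraction $1^n0\mapsto 1^n$ using the machinery of \cite[Section 2.3]{johansen_thesis} (with aperiodicity of $g(\beta)$ ensuring the block $(1^n0)^k$ cannot repeat unboundedly, so the contraction yields a flow equivalence), and then identify the resulting shift with $\X_{\beta'}$ via order preservation, Theorem \ref{thm_renyi}, and Theorem \ref{thm_beta_existence}. Your worry about parsing maximal runs is moot, since $1^{n+1}$ is forbidden and hence every occurrence of $1^n$ is automatically a maximal run.
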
 

\begin{proof}
Note that $1^{n+1} \notin \BB(\X_\beta)$, so each occurrence of $1^n$ in $\X_\beta$ is followed by $0$.
Define a map $\phi_{1^n0 \mapsto 1^n} \colon \BB(\X_\beta) \to \{0,1\}^\ast$ by mapping each word $w$ to the word obtained by deleting the 0 in each occurrence of $1^n0$. It is straightforward to prove that $\phi_{1^n0 \mapsto 1^n}(\BB(\X_\beta))$ is the language of a shift space $\X_\beta^{1^n0 \mapsto 1^n}$  (see \cite[Section 2.3]{johansen_thesis}). There is an upper bound on $\{ k \mid (1^n 0)^k \in \BB(\X_\beta) \}$ since $1^n0$ is a prefix of $g(\beta)$ which is aperiodic, and therefore $\X_\beta \FE \X_\beta^{1^n0 \mapsto 1^n}$ by arguments analogous to the ones used in \cite[Section 2.4]{johansen_thesis}. 

Define $\varphi \colon \X_\beta^+ \to \{ 0, 1 \}^\N$ such that $\varphi(x^+)$ is the sequence obtained from $x^+$ by deleting one $0$ immediately after each occurrence of $1^n$. This is an order preserving map. Use Theorem \ref{thm_beta_existence} to choose $\beta'$ such that $g(\beta') = \varphi(g(\beta))$.
Given $y^+ \in \X_\beta^{1^n0 \mapsto 1^n}$ and $k \in \N$, define $x^+$ to be the sequence obtained from $y_k y_{k+1} \cdots$ by inserting $0$ after $y_j$ if there exists $l \in \N$ such that $y_{[j-ln,j]} = 01^{ln}$ or such that $j = k+ln-1$ and $y_{[k,j]} = 1^{ln}$.
%$y_{[j-ln,j]} = 1^{ln}$ and $y_{j-ln-1} = 0$ or such that $y_{[j-ln,j]} = 1^{ln}$ and $j-ln = k$.
Now, $x^+ \in \X_\beta^+$, so $y_k y_{k+1} \cdots = \varphi(x^+) \leq \varphi(g(\beta)) = g(\beta')$, so $y^+ \in \X_{\beta'}^+$ by Theorem \ref{thm_renyi}. A similar argument proves the other inclusion.  
\end{proof}

\begin{corollary}
\label{cor_beta_add_0_in_beginning}
\index{beta-shift!flow equivalence of}
Let $1 < \beta < 2$ such that $g(\beta)$ is aperiodic and let $n$ be the largest number such that $1^n$ is a prefix of $g(\beta)$. For each $k > n / 2$, $\X_\beta \FE 
%\X_\beta^{01^k \mapsto 01^k0} =
 \X_{\beta'}$ where $g(\beta')$ is obtained from $g(\beta)$ by inserting a $0$ immediately after the initial $1^k$ and after each subsequent occurrence of $01^k$.
\end{corollary}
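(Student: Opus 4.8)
The plan is to reduce this to Lemma \ref{lem_beta_delete_0_in_beginning} by exhibiting a beta-shift $\X_{\beta''}$ whose generating sequence, after applying the deletion operation of that lemma, recovers $g(\beta')$ — or more precisely, to run the argument in reverse, treating the insertion described here as the inverse of a deletion. So first I would set $\beta'$ to be the value supplied by Theorem \ref{thm_beta_existence} applied to the sequence $g' := \psi(g(\beta))$, where $\psi$ is the map inserting a $0$ after the initial $1^k$ and after each later occurrence of $01^k$; one must of course check that $g'$ satisfies the Parry condition $g'_{j}g'_{j+1}\cdots < g'$ for all $j$, which follows because inserting $0$'s in this controlled way is order-preserving in the relevant sense and $g(\beta)$ already satisfies the Parry condition (this is the same bookkeeping as in the proof of Lemma \ref{lem_beta_delete_0_in_beginning}).

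Next I would observe that because $k > n/2$, where $n$ is the largest exponent with $1^n$ a prefix of $g(\beta)$, the pattern $1^k$ cannot appear ``overlapping itself'' in a way that makes the insertion ambiguous: two occurrences of $01^k$ in a word of $\X_\beta$ must be separated (the blocks $1^k$ are disjoint), since $1^{n+1}\notin\BB(\X_\beta)$ and $2k \geq n+1$. This is the key structural point that makes $\psi$ well-defined and invertible on $\BB(\X_{\beta'})$: its inverse is exactly the deletion $\phi_{1^k0\mapsto 1^k}$ restricted appropriately. I would then invoke the constructions of \cite[Section 2.3]{johansen_thesis} to see that the shift-closure of $\psi(\BB(\X_\beta))$ is the language of a shift space $\X_\beta^{\psi}$ with $\X_\beta \FE \X_\beta^{\psi}$ — here the finiteness hypothesis needed for the flow equivalence, namely a bound on $\{k' : (01^k0)^{k'}\in\BB(\X_\beta)\}$, holds because $g(\beta)$ is aperiodic and the relevant block is (essentially) a prefix of it, exactly as in Lemma \ref{lem_beta_delete_0_in_beginning}.

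Finally I would prove $\X_\beta^\psi = \X_{\beta'}$ by the now-standard two-inclusions argument using Theorem \ref{thm_renyi}: extend $\psi$ to a map $\varphi\colon \X_\beta^+\to\{0,1\}^\N$, check it is order-preserving, and for each $x'^+\in\X_{\beta'}^+$ and each $j$ reconstruct a preimage right-ray in $\X_\beta^+$ (deleting the inserted $0$'s, using the disjointness of the $1^k$ blocks to do this unambiguously), so that $x'_j x'_{j+1}\cdots = \varphi(\text{preimage}) \leq \varphi(g(\beta)) = g(\beta')$ and hence $x'^+\in\X_\beta^{\psi+}$; the reverse inclusion is symmetric. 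The main obstacle I anticipate is not any single hard estimate but rather getting the combinatorics of overlapping/adjacent occurrences of $1^k$ exactly right — in particular verifying cleanly that the condition $k>n/2$ is precisely what guarantees the insertion map and its inverse are well-defined on the languages involved, and that after each insertion the new occurrences of $01^k$ that are created (the freshly inserted $0$ followed by the existing $1^k$) do not trigger further insertions, i.e. that $\psi$ is idempotent-compatible with the recursive description. Once that is pinned down, the rest is the routine Theorem \ref{thm_renyi} bookkeeping used repeatedly above.
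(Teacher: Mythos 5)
Your proposal is correct in substance, and you have isolated exactly the right combinatorial point: the hypothesis $k>n/2$ (equivalently $2k\geq n+1$) is what prevents a maximal block $1^m$ of $g(\beta)$ (necessarily $m\leq n$) from splitting, under the insertion, into two blocks both of length $\geq k$, which is precisely what would make the insertion and the deletion of Lemma \ref{lem_beta_delete_0_in_beginning} fail to be mutually inverse. However, you take a much longer route than the paper, whose entire proof is the single line ``Apply Lemma \ref{lem_beta_delete_0_in_beginning} to $\X_{\beta'}$.'' The content behind that line is: $g(\beta')$ begins with $1^k0$, so $k$ is the largest $m$ with $1^m$ a prefix of $g(\beta')$; and because $2k>n$, the leftover blocks produced by the insertion ($1^{n-k}$ after the initial insertion, $1^{m-k}$ after each interior one, and the untouched blocks $1^m$ with $m<k$) all have length strictly less than $k$, so every occurrence of $1^k$ in $g(\beta')$ is one of the blocks after which a $0$ was just inserted. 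Hence the deletion of the lemma, applied to $g(\beta')$, returns exactly $g(\beta)$, and $\X_{\beta'}\FE\X_\beta$. Your plan instead re-runs the whole machinery for the insertion map $\psi$ from scratch: verifying the Parry condition for $\psi(g(\beta))$, establishing that the context-dependent insertion is a flow equivalence (where your finiteness condition should be phrased for the replaced word $1^k0$, a prefix of the aperiodic $g(\beta')$, rather than for $(01^k0)^{k'}$ in $\BB(\X_\beta)$), and the two-inclusion argument via Theorem \ref{thm_renyi}. This works, but it duplicates the content of Lemma \ref{lem_beta_delete_0_in_beginning} rather than exploiting it, and every item you list as remaining bookkeeping comes for free once one observes that the lemma's deletion inverts the insertion. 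Your opening sentence already contains the short proof; commit to it and the rest becomes unnecessary.
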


\begin{proof}
Apply Lemma \ref{lem_beta_delete_0_in_beginning} to $\X_{\beta'}$. 
\end{proof}

\begin{example}
\label{ex_beta_standard}
Consider $\beta > 1$ such that 
$g(\beta) = 1101101(0101100)^\infty$. Use Lemma \ref{lem_beta_delete_0_in_beginning} to see that $\X_\beta \FE \X_{\beta_1}$ when 
$g(\beta_1) = 11111(010110)^\infty$.
By Corollary \ref{cor_beta_add_0_in_beginning},  
$\X_{\beta_1} \FE \X_{\beta_2}$ when
\begin{displaymath} 
g(\beta_2) = 111110(010110)^\infty = 111(110010)^\infty.
\end{displaymath}
Note how this operation permutes the period of the generating sequence. Use Corollary \ref{cor_beta_add_0_in_beginning} again to show that $\X_{\beta_2} \FE \X_{\beta_3}$ when 
\begin{displaymath}
g(\beta_3) = 1110(110010)^\infty = 11(101100)^\infty.
\end{displaymath}
An additional application of Corollary \ref{cor_beta_add_0_in_beginning} will not reduce the aperiodic beginning of the generating sequence further, since it will also add an extra $0$ inside the period.
Note in particular that the length of neither the period nor the beginning of the generating sequence is a flow invariant. The sum of entries in the period of the generating sequence is a flow invariant by Proposition \ref{prop_beta_bf}, but the same is apparently not true for the sum of entries in the aperiodic beginning. Indeed, it is straightforward to use Lemma \ref{lem_beta_delete_0_in_beginning} and Corollary \ref{cor_beta_add_0_in_beginning} to show that $\X_\beta \FE \X_{\beta'}$ if
\begin{align*}
   g(\beta') &= 1^{3n}(110010)^\infty \textrm{ or } \\ 
   g(\beta') &= 1^{3n+2}(101100)^\infty   
\end{align*}
for some $n \in \N$. However, at this stage it is, for instance, still unclear whether $\X_\beta \FE \X_{\beta''}$ when $g(\beta'') = 1^{3n}(101100)^\infty$.
\end{example}

%\begin{example}
%\label{ex_beta_period_and_beginning}
%Consider $\beta_0 > 1$ such that 
%$g(\beta_0) = 11(0110010101)^\infty$. By Lemma \ref{lem_beta_delete_0_in_beginning}, $\X_{\beta_0} \FE \X_{\beta_i}$ when
%\begin{align*}
%  g(\beta_0) &= 11(0110010101)^\infty   \\
%  g(\beta_1) &= 1111(010101011)^\infty       \\
%  g(\beta_2) &= 11111(010101101)^\infty       \\
%  g(\beta_3) &= 111111(010110101)^\infty       \\
%  g(\beta_4) &= 1111111(011010101)^\infty       \\
%                   &\vdots  
%\end{align*}This illustrates how the length and position of the individual blocks of $1$s in the period are important when using Lemma \ref{lem_beta_delete_0_in_beginning} and Corollary \ref{cor_beta_add_0_in_beginning} to construct concrete flow equivalences. This information is ignored by the Bowen-Franks groups of the underlying graphs of the standard covers which only keep track of the sum of the entries in the period.
%%, and later results will suggest that this additional structure is actually \emph{not} preserved under flow equivalence (see Theorem \ref{thm_beta_classification}).
%\end{example}

The following proposition shows how the tools applied in Example \ref{ex_beta_standard} can be used to modify the generating sequence of a general sofic beta-shift to produce a beta-shift flow equivalent to the original.

\begin{proposition}\label{prop_beg_and_per}
Let $1 < \beta < 2$ with $g(\beta) = \beg \per^\infty$ where $\beg =  g_1 \cdots g_n$ and $\per = g_{n+1} \cdots g_{n+p} = 1^{p_1} 0^{q_1} \cdots 1^{p_m} 0^{q_m}$. Assume that $n$ and $p$ are minimal, and let $S_\beg  = \sum_{i=1}^n g_i$, and $ S_\per = \sum_{i=1}^p g_{n+i} = \sum_{j=1}^m p_j$. Given $1 \leq k \leq m$, $\X_\beta \FE \X_{\beta'}$ when 
\begin{displaymath}
g(\beta') = 1^{S_\beg+lS_\per+p_1 + \cdots + p_k}(1^{p_{k+1}} 0^{q_{k+1}} \cdots 1^{p_m} 0^{q_m} 1^{p_1} 0^{q_1} \cdots 1^{p_k} 0^{q_k} )^\infty
\end{displaymath}
for $l \in \Z$ with $S_\beg+lS_\per+p_1 + \cdots + p_k > 0$.
\end{proposition}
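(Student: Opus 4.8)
The plan is to combine the two elementary operations already established—Lemma~\ref{lem_beta_delete_0_in_beginning} (deleting a $0$ after each maximal block $1^n$) and Corollary~\ref{cor_beta_add_0_in_beginning} (inserting a $0$ after the initial $1^k$ and after each subsequent $01^k$)—in a controlled sequence, exactly as illustrated in Example~\ref{ex_beta_standard}, but now keeping careful track of the bookkeeping. The key observation is that a single application of Corollary~\ref{cor_beta_add_0_in_beginning} that inserts a $0$ at the boundary between the aperiodic beginning and the periodic tail has the effect of \emph{cyclically permuting the period}: if $g(\beta) = \beg\,(1^{p_1}0^{q_1}\cdots 1^{p_m}0^{q_m})^\infty$ and we choose $k$ larger than the current longest prefix of the form $1^k$ but no larger than $S_\beg$ plus the leading run, the inserted $0$ merges the trailing $1$-run of the beginning into a longer $1$-block, absorbs the first block $1^{p_1}0^{q_1}$ of the period into the beginning, and the new period is the cyclic shift $(1^{p_2}0^{q_2}\cdots 1^{p_m}0^{q_m}1^{p_1}0^{q_1})^\infty$. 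Iterating this $k$ times rotates the period so that the block $1^{p_{k+1}}0^{q_{k+1}}$ is in front, while the beginning has accumulated the exponents $p_1+\cdots+p_k$ of the $1$-runs (the $0$-runs $0^{q_j}$ get deleted in the process, since deleting/inserting $0$'s after full $1$-blocks collapses them).

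Concretely, I would proceed as follows. \textbf{Step 1.} First use Lemma~\ref{lem_beta_delete_0_in_beginning} repeatedly (or Corollary~\ref{cor_beta_add_0_in_beginning} in reverse) to reduce $g(\beta)$ to the form $1^{N}(1^{p_1}0^{q_1}\cdots 1^{p_m}0^{q_m})^\infty$ with $N = S_\beg$, i.e.\ collapse the entire aperiodic beginning to a single run of $1$'s whose length is the sum of its digits; this is the first manipulation in Example~\ref{ex_beta_standard} and works because $\beg\per^\infty$ is aperiodic so every maximal $1$-block in the beginning is followed by $0$'s that can be deleted. \textbf{Step 2.} Apply Corollary~\ref{cor_beta_add_0_in_beginning} once with $k = N + p_1$ (which is $> $ the current longest $1$-prefix length provided $p_1\ge 1$, which holds since $\per$ begins with a $1$-run; if $p_1=0$ relabel so the period starts with a nonzero run—any cyclic representative works, or absorb the leading $0^{q_1}$ harmlessly). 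This produces $1^{N+p_1}(1^{p_2}0^{q_2}\cdots 1^{p_m}0^{q_m}1^{p_1}0^{q_1})^\infty$, verifying the permutation-of-period phenomenon on the level of the generating sequence via Theorems~\ref{thm_renyi} and~\ref{thm_beta_existence}. \textbf{Step 3.} Iterate Step~2 a total of $k$ times to reach $g = 1^{N+p_1+\cdots+p_k}(1^{p_{k+1}}0^{q_{k+1}}\cdots 1^{p_m}0^{q_m}1^{p_1}0^{q_1}\cdots 1^{p_k}0^{q_k})^\infty$. \textbf{Step 4.} Finally, use Lemma~\ref{lem_beta_delete_0_in_beginning} and Corollary~\ref{cor_beta_add_0_in_beginning} to adjust the length of the leading $1$-run: since each "round trip" (delete a $0$, then re-insert one at a later position) changes the prefix length by exactly $S_\per$ while fixing the period (this is the $l\in\Z$ freedom, and is essentially the computation $g(\beta')=1^{3n}(110010)^\infty$ at the end of Example~\ref{ex_beta_standard}), we can add or subtract any multiple $lS_\per$, reaching $1^{S_\beg + lS_\per + p_1+\cdots+p_k}$ for any $l$ with the exponent positive.

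For each of these steps the verification that the claimed $\X_{\beta'}$ is genuinely the beta-shift with the asserted generating sequence is of the type already carried out in the proof of Lemma~\ref{lem_beta_delete_0_in_beginning}: one defines the order-preserving bijection $\varphi$ on right-rays implementing the deletion/insertion, checks with Theorem~\ref{thm_renyi} that it maps $\X_\beta^+$ onto $\X_{\beta'}^+$, and invokes Theorem~\ref{thm_beta_existence} to legitimize the choice of $\beta'$; flow equivalence then follows from the symbol-expansion/contraction machinery of \cite[Section~2.3--2.4]{johansen_thesis} together with the aperiodicity bound on the number of consecutive copies of the relevant block. \textbf{The main obstacle} I anticipate is Step~2's boundary condition: I must check that at each iteration the length of the current longest $1$-prefix is strictly less than the $k$ fed into Corollary~\ref{cor_beta_add_0_in_beginning}, so that the insertion occurs precisely at the period boundary and not inside the period—this requires knowing that no block $1^{p_j}$ in the period is as long as the accumulated beginning run, which need not be true in general and must be handled (either by first inflating the beginning via the $l$-freedom of Step~4 so that its $1$-run dominates every $p_j$, i.e.\ choosing $l$ large before doing the rotations, or by a more careful induction interleaving Steps~2 and~4). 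Getting this interleaving right, and confirming that the $0$-runs $0^{q_j}$ inside the beginning really do all get absorbed so that the beginning stays a pure $1$-run throughout, is the delicate part; everything else is a routine repetition of the already-proven lemmas.
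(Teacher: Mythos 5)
Your proposal follows essentially the same route as the paper: the paper's own proof consists precisely of your Step 1 (reduce to $1^{S_\beg}\per^\infty$ via Lemma \ref{lem_beta_delete_0_in_beginning}) followed by the instruction to iterate Lemma \ref{lem_beta_delete_0_in_beginning} and Corollary \ref{cor_beta_add_0_in_beginning} as in Example \ref{ex_beta_standard}, which is exactly your Steps 2--4. The boundary subtlety you flag (a run $1^{p_j}$ inside the period attaining the length of the current initial $1$-run, so that the Corollary also inserts a $0$ inside the period) is a genuine point to check, but the paper leaves it equally implicit and offers no more detail than your sketch does.
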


\begin{proof}
By Lemma \ref{lem_beta_delete_0_in_beginning}, $\X_\beta$ is flow equivalent to the beta-shift with generating sequence $1^{S_\beg}\per$. The rest of the statement follows by applying Lemma \ref{lem_beta_delete_0_in_beginning} and Corollary \ref{cor_beta_add_0_in_beginning} as in Example \ref{ex_beta_standard}.
\end{proof}

\begin{remark}
\label{rem_beta_standard}
In particular, Proposition \ref{prop_beg_and_per} can be used to show that a beta-shift $\X_\beta$ with $g(\beta) = \beg \per^\infty$ is flow equivalent to some beta-shift $\X_{\beta'}$ where $g(\beta') = 1^n (\per')^\infty$ with $n \leq \lvert \per' \rvert$.
\end{remark}

\begin{remark}\label{rem_reduction_of_problem}
By proposition \ref{prop_beta_bf}, two flow equivalent sofic beta-shifts must have the same sum of elements in the period, but it is currently unknown whether this number is a complete invariant.

Assume that $\X_\beta \FE \X_{\beta'}$ whenever $g(\beta) = \beg\per^\infty$ and $g(\beta') = \beg(\per')^\infty$ with $\per'$ obtained from $\per$ by inserting a 0 after an arbitrary 1 in $\per$ and $\per'$ minimal in the sense that no shorter period gives the same generating sequence. 
Then it follows from Proposition \ref{prop_beg_and_per} that two arbitrary beta-shifts are flow equivalent if and only if they have the same sum of entries in the period. 
In this way, the flow classification of sofic beta-shifts can be reduced to the question of whether it is possible to add zeroes in the period as specified above using conjugacies, symbol expansions and symbol reductions. However, it is still unknown whether it is possible to carry out this final reduction of the problem to show that the sum of entries in the period of the generating sequence is a complete invariant of flow equivalence of sofic beta-shifts.
\end{remark}

%########
\subsection{Flow equivalence of fiber products}
Let $1 < \beta < 2$ with sofic $\X_\beta$, let $n,p$ be minimal such that $g(\beta) = g_1 \cdots g_n (g_{n+1} \cdots  g_{n+p})^\infty$, and let $(P_\beta, \LL_\beta)$ be the fiber product cover of $\X_\beta$. The goal of this section is to study the flow class of the reducible edge shift $\X_{P_\beta}$ defined by the underlying graph. Let $N = \sum_{i=1}^n g_i$ and $S = \sum_{i=1}^p g_{n+i}$. By Remark \ref{rem_beta_standard}, it can be assumed that $N \leq S$ without loss of generality.

Let $P_\beta^0 = \{v_1, \ldots, v_{n+p}, v'_{n+1}, \ldots, v'_{n+p}, v''_{n+1}, \ldots, v'_{n+p} \}$ as in Figure \ref{fig_beta_fiber}. Let $e \in P_\beta^1$ with $\LL_\beta(e) = 0$ and $r(e) \neq v_1$. Then there exists $f \in P_\beta^1$ such that for $\lambda \in \X_{P_\beta}$, $\lambda_0 = f$ if and only if $\lambda_1=e$. Hence, all these edges can be removed using symbol contraction, and this leaves the edge shift of the graph shown in Figure \ref{fig_beta_fiber_fe_I}.
In this graph, the vertices $v_N$ and $v_{N+S}$ each emit one edge to $v_{N+1}$ and one edge to $v_1$. Use in-amalgamation to merge these two vertices. This identifies the edges $e$ and $g$ and the edges $f$ and $h$ marked in Figure \ref{fig_beta_fiber_fe_I}. The result is a graph of the same form, where the size of $N$ is reduced by $1$. Repeat this process $N$ times to show that $\X_{P_\beta}$ is flow equivalent to the graph in Figure \ref{fig_beta_fiber_fe_II}. This leads to the following:

\begin{figure}
\begin{center}
\begin{tikzpicture}
 [bend angle=10,
   clearRound/.style = {circle, inner sep = 0pt, minimum size = 17mm},
   clear/.style = {rectangle, minimum width = 10 mm, minimum height = 6 mm, inner sep = 0pt},  
   greyRound/.style = {circle, draw, minimum size = 1 mm, inner sep = 0pt, fill=black!10},
 grey/.style = {rectangle, draw, minimum size = 6 mm, inner sep = 1pt, fill=black!10},
   white/.style = {rectangle, draw, minimum size = 6 mm, inner sep =1pt},
   to/.style = {->, shorten <= 1 pt, >=stealth', semithick}]
  
  % Nodes
  \node[white] (v1)     at (0,0) {$v_1$};
  \node[white] (v2)     at (1.8,0) {$v_2$};
  \node[clear] (dots)   at (3.6,0) {$\cdots$};
  \node[white] (vn) at (5.4,0) {$v_{N}$};  
  \node[white] (vn+1) at (7.2,0) {$v_{N+1}$};
  \node[clear] (dots2) at (9,0) {$\cdots$}; 
  \node[white] (vn+p) at (10.8,0) {$v_{N+S}$};

  \node[white] (vn+1')     at (0,2) {$v_{N+1}'$};
  \node[clear] (dots2')     at (1.5,2) {$\cdots$}; 
  \node[white] (vn+p')     at (3,2) {$v_{N+S}'$};

  \node[white] (vn+1'')     at (0,-3) {$v_{N+1}''$};
  \node[clear] (dots2'')     at (1.5,-3) {$\cdots$}; 
  \node[white] (vn+p'')     at (3,-3) {$v_{N+S}''$};

  % Edges in the Fischer cover
  \draw[to] (v1) to node[auto] {$$} (v2); 
  \draw[to] (v2) to node[auto] {$$} (dots); 
  \draw[to] (dots) to node[auto] {$$} (vn);
  \draw[to] (vn) to node[auto] {$e$} (vn+1);  
  \draw[to] (vn+1) to node[auto] {$$} (dots2);
  \draw[to] (dots2) to node[auto] {$$} (vn+p);  

  \draw[to, loop left] (v1) to node[auto] {$$} (v1); 
  \draw[to, bend left = 25] (v2) to node[auto] {$$} (v1);    
  \draw[to, bend left = 30] (vn) to node[near start, below] {$f$} (v1);
  \draw[to, bend left = 35] (vn+1) to node[near start, below] {$$} (v1);       
  \draw[to, bend left = 35] (vn+p) to node[auto] {$h$} (v1);     
  \draw[to, bend right = 45] (vn+p) to node[auto,swap] {$g$} (vn+1);     

  \draw[to] (vn+1') to node[auto] {$$} (dots2');
  \draw[to] (dots2') to node[auto] {$$} (vn+p');  
  \draw[to, bend right = 45] (vn+p') to node[auto,swap] {$$} (vn+1');     
  \draw[to] (vn+1') to node[very near start, left] {$$} (v1);
  \draw[to] (vn+p') to node[near start, right] {$$} (v1);

  \draw[to] (vn+1'') to node[auto] {$$} (dots2'');
  \draw[to] (dots2'') to node[auto] {$$} (vn+p'');  
  \draw[to, bend left = 45] (vn+p'') to node[auto,swap] {$$} (vn+1'');     
  \draw[to] (vn+1'') to node[very near start, left] {$$} (v1);
  \draw[to] (vn+p'') to node[very near start, right] {$$} (v1);
\end{tikzpicture}
\end{center}
\caption[Flow equivalence of fiber product covers I.]{An unlabeled graph defining an edge shift flow equivalent to the edge shift of the underlying graph of the fiber product cover of a sofic beta-shift with minimal $n,p$ such that  $g(\beta) =  g_1 \cdots g_n (g_{n+1} \cdots g_{n+p})^\infty$. Here $N = \sum_{i=1}^n g_i$ and $S = \sum_{i=1}^p g_{n+i}$. Every vertex emits precisely two edges, one of which terminates at $v_1$.} 
\label{fig_beta_fiber_fe_I}
\end{figure}
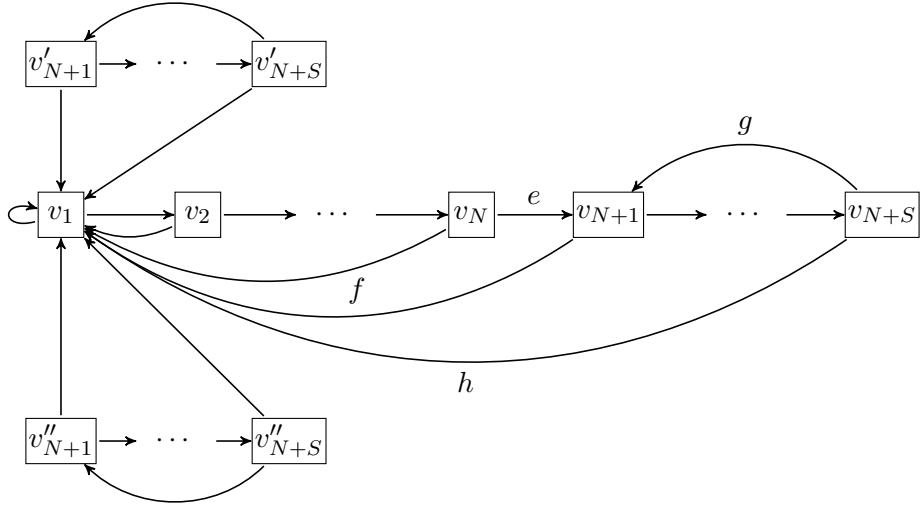

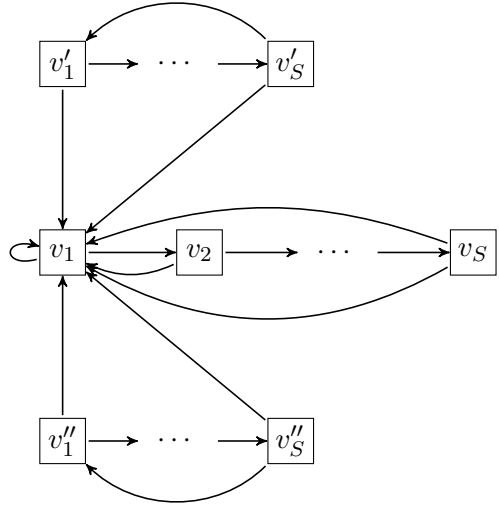
\begin{figure}
\begin{center}
\begin{tikzpicture}
 [bend angle=10,
   clearRound/.style = {circle, inner sep = 0pt, minimum size = 17mm},
   clear/.style = {rectangle, minimum width = 10 mm, minimum height = 6 mm, inner sep = 0pt},  
   greyRound/.style = {circle, draw, minimum size = 1 mm, inner sep =
      0pt, fill=black!10},
   grey/.style = {rectangle, draw, minimum size = 6 mm, inner sep =
      1pt, fill=black!10},
   white/.style = {rectangle, draw, minimum size = 6 mm, inner sep =
      1pt},
   to/.style = {->, shorten <= 1 pt, >=stealth', semithick}]
  
  % Nodes
  \node[white] (v1)     at (0,0) {$v_1$};
  \node[white] (v2)     at (1.8,0) {$v_2$};
  \node[clear] (dots)   at (3.6,0) {$\cdots$};
  \node[white] (vp) at (5.4,0) {$v_{S}$};

  \node[white] (vn+1')     at (0,2.5) {$v_{1}'$};
  \node[clear] (dots2')     at (1.5,2.5) {$\cdots$}; 
  \node[white] (vn+p')     at (3,2.5) {$v_{S}'$};

  \node[white] (vn+1'')     at (0,-2.5) {$v_{1}''$};
  \node[clear] (dots2'')     at (1.5,-2.5) {$\cdots$}; 
  \node[white] (vn+p'')     at (3,-2.5) {$v_{S}''$};

  % Edges in the Fischer cover
  \draw[to] (v1) to node[auto] {$$} (v2); 
  \draw[to] (v2) to node[auto] {$$} (dots); 
  \draw[to] (dots) to node[auto] {$$} (vp);

  \draw[to, loop left] (v1) to node[auto] {$$} (v1); 
  \draw[to, bend left = 25] (v2) to node[auto] {$$} (v1);    
  \draw[to, bend left = 30] (vp) to node[near start, below] {$$} (v1);
  \draw[to, bend right = 20] (vp) to node[near start, below] {$$} (v1);

  \draw[to] (vn+1') to node[auto] {$$} (dots2');
  \draw[to] (dots2') to node[auto] {$$} (vn+p');  
  \draw[to, bend right = 45] (vn+p') to node[auto,swap] {$$} (vn+1');     
  \draw[to] (vn+1') to node[very near start, left] {$$} (v1);
  \draw[to] (vn+p') to node[near start, right] {$$} (v1);

  \draw[to] (vn+1'') to node[auto] {$$} (dots2'');
  \draw[to] (dots2'') to node[auto] {$$} (vn+p'');  
  \draw[to, bend left = 45] (vn+p'') to node[auto,swap] {$$} (vn+1'');     
  \draw[to] (vn+1'') to node[very near start, left] {$$} (v1);
  \draw[to] (vn+p'') to node[very near start, right] {$$} (v1);
\end{tikzpicture}
\end{center}
\caption[Flow equivalence of fiber product covers II.]{An unlabeled graph defining an edge shift flow equivalent to the edge shift of the underlying graph of the fiber product cover of a sofic beta-shift with minimal $n,p$ such that  $g(\beta) =  g_1 \cdots g_n (g_{n+1} \cdots g_{n+p})^\infty$. Here $S = \sum_{i=1}^p g_{n+i}$.} 
\label{fig_beta_fiber_fe_II}
\end{figure}

\begin{proposition}
\label{prop_beta_fiber_fe}
For $i \in \{1,2\}$, let $\beta_i > 1$ with minimal $n_i,p_i$ such that
%\begin{displaymath} 
$g(\beta) =  g^i_1 \cdots g^i_{n_i} (g^i_{n_i+1} \cdots g^i_{n_i+p_i})^\infty$,
%\end{displaymath}
let $S_i = \sum_{i=1}^{p_i} g_{n_i+i}$, and let $(P_{\beta_i}, \LL_{P_{\beta_i}})$ be the fiber product cover of $\X_{\beta_i}$. Then there exists a flow equivalence $\Phi \colon S\X_{P_{\beta_1}} \to S\X_{P_{\beta_2}}$ which commutes with the $\Z /2\Z$ actions induced by the labels if and only if $S_1 = S_2$.
\end{proposition}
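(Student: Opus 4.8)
Throughout I take $\X_{\beta_1}$ and $\X_{\beta_2}$ to be strictly sofic, i.e.\ $g(\beta_i)$ eventually periodic but not periodic, which is the setting in which the fiber product cover has the form of Figure \ref{fig_beta_fiber} and the $\Z/2\Z$ action is nontrivial. The plan is to prove the two implications separately. For ``$\Rightarrow$'', equivariance is not needed: any flow equivalence $S\X_{P_{\beta_1}} \to S\X_{P_{\beta_2}}$ yields $\BF(A_{P_{\beta_1}}) \cong \BF(A_{P_{\beta_2}})$, since the Bowen--Franks group is a flow invariant, so Proposition \ref{prop_beta_bf} gives $(\Z/S_1\Z) \oplus \Z \oplus \Z \cong (\Z/S_2\Z) \oplus \Z \oplus \Z$. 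Comparing torsion subgroups yields $\Z/S_1\Z \cong \Z/S_2\Z$, and since $S_i \geq 1$ (the period of a generating sequence contains a nonzero entry) this forces $S_1 = S_2$.

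For ``$\Leftarrow$'', write $S_1 = S_2 = S$ and let $\varphi_i$ denote the $\Z/2\Z$ action on $\X_{P_{\beta_i}}$ induced by the labels. I would show that each $(\X_{P_{\beta_i}}, \varphi_i)$ is equivariantly flow equivalent to one and the same pair $(\X_{G_S}, \psi_S)$, where $G_S$ is the graph of Figure \ref{fig_beta_fiber_fe_II} with parameter $S$ and $\psi_S$ fixes the central component while interchanging the two outer irreducible components; composing the two equivariant flow equivalences and inverting one then produces $\Phi$. To build the equivariant reduction I would rerun the chain of moves carried out just before the statement of the proposition and check equivariance at each step. By Proposition \ref{prop_beta_fiber} and Remark \ref{rem_beta_fiber_action}, $\varphi_i$ fixes the central (diagonal) part of the fiber product cover pointwise and swaps the two outer copies. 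Hence the symbol contractions that delete the runs of $0$'s are of two kinds: those lying in the central part, where there is nothing to check because $\varphi_i$ acts as the identity there, and those in the outer components, which occur in $\varphi_i$-mirror pairs and can be performed simultaneously; likewise the in-amalgamations successively merging $v_N$ with $v_{N+S}$ take place entirely in the central part. Each of these moves, or its inverse, is an equivariant conjugacy or an equivariant symbol expansion (each move extends canonically to the swapped copy), so the composite reduction is an equivariant flow equivalence onto $(\X_{G_S}, \psi_S)$ (cf.\ \cite{boyle_sullivan} and \cite[Section 2.3]{johansen_thesis}), and this pair depends only on $S$.

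I expect the main obstacle to be the bookkeeping in the ``$\Leftarrow$'' direction: one must verify that every elementary move used to pass from Figure \ref{fig_beta_fiber} to Figure \ref{fig_beta_fiber_fe_II} can genuinely be chosen $\varphi_i$-equivariantly --- in particular that contracting a $0$-edge in one outer component together with its mirror in the other induces a flow equivalence of suspensions commuting with the $\Z/2\Z$ action --- and that the action induced on the edge shift of the normal form $G_S$ is exactly $\psi_S$, independently of $n_i$ and $p_i$. By Remark \ref{rem_beta_standard} one may first arrange $N_i \leq S$, but this is unnecessary here since the reduction eliminates the parameter $N_i$ altogether.
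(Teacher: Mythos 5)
Your proposal is correct and follows essentially the same route as the paper: the ``only if'' direction via flow invariance of the Bowen--Franks group together with Proposition \ref{prop_beta_bf}, and the ``if'' direction by running the reduction from Figure \ref{fig_beta_fiber} to the normal form of Figure \ref{fig_beta_fiber_fe_II} (which depends only on $S$) and observing that each conjugacy and symbol reduction respects the $\Z/2\Z$ action. Your explicit bookkeeping --- moves in the diagonal part are fixed by the action, moves in the outer components come in mirror pairs --- is just a more detailed verification of the equivariance the paper asserts.
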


\begin{proof}
If there is such a flow equivalence, then the Bowen-Franks groups of the edge shifts $\X_{P_{\beta_1}}$ and $\X_{P_{\beta_2}}$ must be equal \cite{franks}, so $S_1 = S_2$ by Proposition \ref{prop_beta_bf}. Reversely, if $S_1 = S_2$, then the preceding arguments prove that there is a flow equivalence $\Phi \colon S\X_{P_{\beta_1}} \to S\X_{P_{\beta_2}}$. Furthermore, the $\Z / 2\Z$ actions on $\X_{P_{\beta_i}}$ induced by the labels are respected by the conjugacies and symbol reductions used in the construction, so $\Phi$ commutes with the actions. 
\end{proof}

\section{Perspectives}
The concrete constructions of flow equivalences culminating in Remark \ref{rem_reduction_of_problem} give a drastic reduction of the flow classification problem for sofic beta-shifts by showing that the general problem can be reduced to a question of whether it is possible to add extra zeroes inside the period of the generating sequence. However, it is still unknown whether this final step can be carried out in general.
For example, there is no known way to determine whether the beta-shifts $\X_{\beta_1}$ and $\X_{\beta_2}$ are flow equivalent when $\beta_1, \beta_2$ have generating sequences  
\begin{displaymath}
  g(\beta_1) = 1(110)^\infty
\textrm{ and }
  g(\beta_2) = 11(110)^\infty . 
\end{displaymath}
This is arguably the simplest question about the flow equivalence of sofic beta-shifts that cannot be answered directly by the tools presented above.

Sofic beta-shifts do not belong to the class of irreducible 2-sofic shifts currently covered by the classification results obtained in \cite{boyle_carlsen_eilers_isotopy,boyle_carlsen_eilers_sofic}. However, if it is possible to extend these results to cover sofic beta-shifts then Proposition \ref{prop_beta_fiber_fe} will be sufficient to prove that  the sum of elements in the period of the generating sequence is a complete invariant of flow equivalence of sofic beta-shifts.

\end{document}